\newtheorem{lemma}{Lemma}[section]
\newtheorem{corollary}[lemma]{Corollary}
\newtheorem{theorem}[lemma]{Theorem}
\newtheorem{remark}[lemma]{Remark}
\begin{document}
\setcounter{section}{-1}

\section*{
\begin{center}
Wandering subspace property for homogeneous\\ invariant subspaces
\end{center}
}
\begin{center}
J\"org Eschmeier
\end{center}

\vspace{.8cm}

\begin{center}
\parbox{12cm}{\small 
\vspace{0.5cm}

For graded Hilbert spaces $H$ and shift-like
commuting tuples $T \in B(H)^n$, we show that each homogeneous joint invariant subspace $M$ of $T$ has finite index and is
generated by its wandering subspace. Under suitable conditions on the grading $(H_k)_{k\geq 0}$ of $H$ 
the algebraic direct sum $\tilde{M} = \oplus_{k\geq 0} M \cap H_k$ becomes a finitely generated module over
the polynomial ring $\mathbb C[z]$. We show that the wandering subspace $W_T(M)$ of $M$ is contained in 
$\tilde{M}$ and that each linear basis of $W_T(M)$ forms a minimal set of generators for
$\tilde{M}$. We describe an algorithm that transforms each set of homogeneous generators of $\tilde{M}$ into
a minimal set of generators and can be used to compute minimal sets of generators for homogeneous
ideals $I \subset \mathbb C[z]$. We prove that each finitely generated $\gamma$-graded commuting row contraction
$T \in B(H)^n$ admits a finite weak resolution in the sense of Arveson \cite{A} or Douglas and Misra \cite{DM}.

\vspace{.5cm}

\emph{2010 Mathematics Subject Classification:} 47A13, 13P10\\
\emph{Key words and phrases:} Wandering subspaces, homogeneous invariant subspaces, minimal sets of generators,
weak resolutions}

\end{center}

\section{Introduction}
Let $T \in B(H)$ be a bounded linear operator on a complex Hilbert space $H$ and let $M \subset H$ be a closed $T$-invariant subspace.
Following Halmos \cite{H} one defines its \textit{wandering subspace} by $W_T(M) = M\ominus TM$. 
It is obvious from the definition that
 \[
W_T(M) \perp T^k W_T(M)
\]
for all $k \in \mathbb N \setminus \{ 0 \}$.  Beurling's invariant subspace theorem implies that, for each
closed linear subspace $M \subset H^2(\mathbb T)$ of the Hardy space on the unit circle which is invariant
under the operator $M_z$ of multiplication with the argument, the wandering subspace $W_{M_z}(M)$ is
one-dimensional and generates $M$ as an invariant subspace. More precisely, there is an inner function
$\theta \in H^{\infty}(\mathbb T)$ such that $W_{M_z}(M) = \mathbb C \theta$ and
\[
M = \bigvee_{k \in \mathbb N} M^k_z W_{M_z}(M) = \theta H^2(\mathbb T).
\]
For an arbitrary operator $T \in B(H)$ and a closed $T$-invariant subspace $M \subset H$, the number 
${\rm ind}(T) = \dim W_T(M)$ is usually called the \textit{index} of $M$. The operator $T$ is said to possess the
\textit{wandering subspace property} if each closed $T$-invariant subspace $M \subset H$ is generated by its
wandering subspace in the sense that
\[
M = \bigvee _{k \in \mathbb N} T^k W_{T}(M).
\]
It is well known that the Bergman space $L^2_a(\mathbb D)$ of all square integrable analytic functions on the unit disc
contains $M_z$-invariant subspaces of infinite index (cf. \cite{ABFP} or \cite{HRS}).
 On the other hand, it was shown by Aleman, Richter and Sundberg \cite{ARS}
that $M_z \in B(L^2_a(\mathbb D))$ possesses the wandering subspace property. More generally, results of Shimorin \cite{Sh},
Hedenmalm and Zhu \cite{HZ}, and others, show that the multiplication operator $M_z$ on the analytic functional Hilbert spaces
$H(K_m) \subset \mathcal O(\mathbb D)$ given by the reproducing kernels
\[
K_m(z,w) = \frac{1}{(1 - z\overline{w})^m}
\]
possesses the wandering subspace property for $1 \leq m \leq 3$, but fail to have this property for $m > 6$.

If the unit disc is replaced by the unit ball in $\mathbb B \subset \mathbb C^n$, then the situation becomes more
involved. 
It is known that in the standard analytic functional Hilbert spaces on the unit ball such as the 
Drury-Arveson space, the Hardy or Bergman space, there exist
invariant subspaces of infinite index and that each of these spaces contains invariant subspaces that
are not generated by their wandering subspace \cite{BEKS}.\\

In this note we show that, for Hilbert spaces $H$ that admit a suitable grading 
$H = \bigcirc\!\!\!\!\!\!\!\!\perp ^\infty_{k=0} H_k$ and shift-like commuting tuples $T \in B(H)^n$, each
homogeneous joint invariant subspace $M$ of $T$ has finite index and is generated by its wandering subspace.
More precisely, let us suppose that there is a weight tuple $\gamma = (\gamma_1, \ldots, \gamma_n)$ consisting of positive
integers $\gamma_i > 0$ such that
\[
T_i H_k \subset H_{k + \gamma_i} \quad \mbox{ for } k \in \mathbb N  \mbox{ and } i = 1, \ldots, n.
\]
We call a closed subspace $M \subset H$ \textit{homogeneous} if it admits the orthogonal decomposition
$M = \bigcirc\!\!\!\!\!\!\!\!\perp ^\infty_{k=0} M \cap H_k$. 
For a closed joint $T$-invariant subspace $M \subset H$, its \textit{wandering subpace}
is defined as $W_T(M) = M \ominus \sum^n_{i=1} T_i M$. Under the hypothesis that the space $H_0$ is finite
dimensional and that $H = \bigvee_{k \in \mathbb N^n} T^k H_0$ we show that each
wandering subspace of a homogeneous joint $T$-invariant subspace $M$ is finite dimensional and that $M$ is generated
by its wandering subspace in the sense that
\[
M = \bigvee_{k \in \mathbb N^n} T^k W_T(M).
\]
Furthermore we show that the maps assigning to each homogeneous joint $T$-invariant subspace its wandering subspace
and conversely, to each homogenous wandering subspace of $H$ the invariant subspace that it generates, define
bijections that are inverse to each other.\\

Under the above conditions the algebraic direct sum $\tilde{M} = \oplus ^\infty_{k=0} M \cap H_k$ becomes a finitely 
generated $\mathbb C[z]$-submodule of $H$ with respect to the module structure
\[
\mathbb C[z] \times H \rightarrow H, (p,T) \mapsto p(T)x.
\]
We show that, for each homogeneous joint $T$-invariant subpace $M \subset H$, its wandering subspace $W_T(M)$ is contained
in $\tilde{M}$ and that each linear basis of $W_T(M)$ forms  a minimal set of generators for $\mathbb C[z]$-module
$\tilde{M}$. We deduce an algorithm which transforms each finite set of homogeneous generators for $\tilde{M}$ into
an orthogonal basis of $W_T(M)$ and hence into a minimal set of generators for $\tilde{M}$. By applying the above
results to the closure of a homogenous ideal $I \subset \mathbb C[z]$ in the Hardy space on the unit polydisc
$\mathbb D^n \subset \mathbb C^n$ one obtains an algorithm that transforms each set of homogeneous generators of
$I$ into a minimal set of homogeneous generators. We prove that each finitely generated $\gamma$-graded
commuting row contraction $T \in B(H)^n$ admits a unique weak resolution in the sense of Arveson \cite{A} and
Douglas and Misra \cite{DM}. We thus extend results of Arveson \cite{A} to the case of $\gamma$-graded tuples.

\section{Preliminaries}
Let $T=(T_1,\ldots,T_n)\in B(H)^n$ be a commuting tuple of bounded linear operators on a complex Hilbert space $H$. We write ${\rm Lat}(T)$ for the set of all closed linear
subspaces $M \subset H$ such that $T_iM\subset M$ for $i=1,\ldots,n$. For an arbitrary subset $M \subset H$, we denote by
\[
[M] = \bigvee (T^k M;\ k \in \mathbb N^n)
\]
the smallest closed $T$-invariant subspace containing $M$. A closed subspace $W \subset H$ is called a \textit{wandering subspace} for $T$ if
\[
W\perp T^k W\quad (k \in \mathbb N^n \setminus \lbrace 0 \rbrace).
\]
We say that $W$ is a \textit{generating wandering subspace} for $T$ if in addition $H = [W]$. 
If $W \subset M \subset H$ are closed subspaces of $H$ with $M \in {\rm Lat}(T)$, then obviously
$W$ is a wandering subspace for $T$ if and only if it is a wandering subspace for the restriction 
$T|M=(T_1|M,\ldots,T_n|M)$ of $T$ onto $M$. For each closed invariant subspace
$M \in {\rm Lat}(T)$, the space
\[
W_T(M) = M\ominus \sum^n_{i=1}T_iM
\]
is a wandering subspace for $T$. We call this space the \textit{wandering subspace associated with} 
$T|M$. If $W$ is a generating wandering subspace for $T$, then an elementary argument shows that necessarily
$W = W_T(H)$ is the wandering subspace associated with $T$. Indeed, if $w \in W$, 
then since each element in $H$ can be approximated by linear combinations of vectors of the
form $T^kh\ (k \in \mathbb N^n,h \in W)$, it follows that $w \in W_T(H)$. To prove the opposite inclusion, it suffices to observe that the orthogonal decomposition
\[
H = W\oplus \bigvee(T^kW;\ k \in \mathbb N^n \setminus \lbrace 0 \rbrace)
\]
holds for each wandering subspace $W$ that is generating for $T$. 

\section{Homogeneous invariant subspaces}

Let $T \in B(H)^n$ be a commuting tuple of bounded linear operators on $H$. Throughout the whole paper 
we shall suppose that the underlying Hilbert space $H$ admits an orthogonal decomposition
$H = \bigcirc\!\!\!\!\!\!\!\!\perp^\infty_{k=0} H_k$ with closed subspaces $H_k\subset H$.
We call an element $h \in H$ homogeneous (of degree $k$) if $h \in H_k$ for some $k \in \mathbb N$. 
Let $\gamma = (\gamma_1,\ldots,\gamma_n)$ be an $n$-tuple of positive integers $\gamma_i> 0$. The
tuple $T \in B(H)^n$ is said to be of \textit{degree} $\gamma$ with respect to the above orthogonal 
decomposition of $H$, or simply $\gamma$-graded, if $T_i H_k\subset H_{k+\gamma_i}$ for $k\in \mathbb N$ and $i=1,\ldots,n$.
A closed subspace $M \subset H$ is called \textit{homogeneous} if
\[
M = \bigvee\limits^\infty_{k=0}M\cap H_k.
\]
An elementary exercise left to the reader shows that a closed linear subspace $M \subset H$ is homogeneous if
and only if one of the following equivalent conditions holds:\\
\begin{enumerate}[{\rm (i)}]
\item $P_M H_k \subset H_k \; \mbox{for all} \; k \in \mathbb N$,
\item $P_{H_k} M \subset M \; \mbox{for all} \; k \in \mathbb N$,
\end{enumerate}
where $P_M$ and $P_{H_k}$ are the orthogonal projections of $H$ onto $M$ and $H_k$.
For a homogeneous subspace $M \subset H$, the spaces $M_k = M\cap H_k$ are referred to as the \textit{homogeneous components} of $M$. For convenience, we define $M_k = \lbrace 0 \rbrace$ for each integer $k< 0$.
If $T \in B(H)^n$ is of degree $\gamma$ with respect to the decomposition
$H = \bigcirc\!\!\!\!\!\!\!\!\perp^\infty_{k=0} H_k$ and $M \in {\rm Lat}(T)$ is homogeneous, then the restriction
$T|M \in B(M)^n$ and the compression $P_{M^{\bot}} T| M^{\bot} \in B(M^{\bot})^n$ are of degree $\gamma$ 
with respect to the orthogonal decompositions
\[
M = \bigcirc\!\!\!\!\!\!\!\!\perp^\infty_{k=0} M \cap H_k \; \mbox{and} \; M^{\bot} = \bigcirc\!\!\!\!\!\!\!\!\perp^\infty_{k=0} M^{\bot} \cap H_k.
\]
\begin{lemma}\label{generating}
Let $T \in B(H)^n$ be a commuting tuple of degree $\gamma$. Then the wandering subspace $W = W_T(H)$ 
associated with $T$ is generating for $T$ and homogeneous with
\[
W_k = H_k\ominus \sum^n_{i=1}T_iH_{k-\gamma_i}\quad (k\geq 0).
\]
\end{lemma}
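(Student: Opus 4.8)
The plan is to establish three things: first, that $W = W_T(H)$ is homogeneous with the stated homogeneous components; second, that $W$ is a wandering subspace (which is already immediate from the general remarks in the Preliminaries, since $W_T(M)$ is always wandering); and third, that $W$ is generating, i.e. $H = [W]$. The key structural input is that $T$ is $\gamma$-graded, so that $\sum_{i=1}^n T_i H$ is itself a homogeneous subspace: its $k$-th homogeneous component is $\sum_{i=1}^n T_i H_{k-\gamma_i}$ because $T_i H_{k-\gamma_i} \subset H_k$ and these are the only contributions landing in degree $k$. I would first argue that $R := \overline{\sum_{i=1}^n T_i H}$ is homogeneous with $R_k = \overline{\sum_{i=1}^n T_i H_{k-\gamma_i}}$ (one should be slightly careful here, since a closure of a sum of graded pieces need not be graded in general — but $R \cap H_k \supset \sum_i T_i H_{k-\gamma_i}$, and projecting $R$ onto $H_k$ and using $\gamma$-gradedness of the compression/ the closedness of each $H_k$ gives the reverse; alternatively, observe $R^\perp = \bigcap_i \ker T_i^*$ is homogeneous since each $T_i^*$ maps $H_k$ into $H_{k-\gamma_i}$, hence $R$ is homogeneous). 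Then $W = H \ominus R = R^\perp$ is homogeneous with $W_k = H_k \ominus R_k = H_k \ominus \sum_{i=1}^n T_i H_{k-\gamma_i}$, which is the claimed formula (the closure inside is harmless after taking the orthogonal complement within the finite... rather, within $H_k$, since $W_k^\perp$ inside $H_k$ equals the closure of $\sum_i T_i H_{k-\gamma_i}$).

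For the generating property, I would show $H = [W]$ by proving, by induction on $k$, that $H_k \subset [W]$ for every $k \geq 0$. For $k = 0$: since $\gamma_i > 0$, we have $H_{-\gamma_i} = 0$, so $W_0 = H_0$, giving $H_0 = W_0 \subset W \subset [W]$. For the inductive step, suppose $H_0, \ldots, H_{k-1} \subset [W]$. Given $h \in H_k$, write $h = w + r$ with $w \in W_k$ and $r \in R_k = \overline{\sum_{i=1}^n T_i H_{k-\gamma_i}}$. Now $w \in W \subset [W]$, and $r$ lies in the closure of $\sum_i T_i H_{k-\gamma_i}$; since each $H_{k-\gamma_i} \subset [W]$ by the inductive hypothesis (as $k - \gamma_i < k$) and $[W]$ is $T$-invariant and closed, we get $T_i H_{k-\gamma_i} \subset [W]$ and hence $r \in [W]$. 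Therefore $h = w + r \in [W]$, completing the induction. Thus every $H_k \subset [W]$, and since $H = \bigvee_k H_k$ we conclude $H = [W]$.

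Finally, the wandering property: $W = W_T(H) = H \ominus \sum_i T_i H$ satisfies $W \perp \sum_i T_i H \supset T^k H \supset T^k W$ for every multi-index $k \neq 0$, so $W \perp T^k W$, confirming $W$ is a wandering subspace for $T$. Combined with $H = [W]$ this says $W$ is a generating wandering subspace. I do not expect a serious obstacle here; the one point requiring mild care is the commutation of closure with the grading in the formula for $W_k$ — but this is precisely where the argument via $R^\perp = \bigcap_i \ker T_i^*$ and the adjoint relation $T_i^* H_k \subset H_{k-\gamma_i}$ (valid because $T_i$ shifts grades up by $\gamma_i$ and the decomposition is orthogonal) makes everything clean, since an intersection of homogeneous subspaces is homogeneous and $W_k = H_k \cap R^\perp$ then has the stated description upon taking complements within $H_k$.
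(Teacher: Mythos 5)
Your proposal is correct and follows essentially the same route as the paper: the identification of $W_k$ comes down to the observation that orthogonality to $\sum_i T_i H$ can be tested degree by degree because $T_i$ shifts the grading by $\gamma_i$ (your adjoint formulation $T_i^*H_k\subset H_{k-\gamma_i}$ is just a repackaging of the paper's inner-product computation), and the generating property is proved by the identical induction $H_j=W_j\oplus\overline{\sum_i T_iH_{j-\gamma_i}}\subset[W]$. No gaps.
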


\begin{proof}
Let $h=\sum^\infty_{j=0}h_j\in W$ be given with $h_j\in H_j$ for all $j$. For $i=1,\ldots,n$ and
$x=\sum^\infty_{j=0}x_j\in H$ with $x_j \in H_j$ for all $j$, we obtain
\[
\langle h_k,T_ix\rangle=\langle h_k,T_ix_{k-\gamma_i}\rangle=\langle h,T_ix_{k-\gamma_i}\rangle=0
\]
for each $k$. Thus $W \subset H$ is a homogeneous subspace. If $h\in H_k\ominus\big(\sum^n_{i=1}T_iH_{k-\gamma_i}\big)$ 
and $x=\sum^\infty_{j=0}x_j \in H$ is a vector with $x_j \in H_j$ for all $j$, then
\[
\langle h,T_i x\rangle=\langle h,T_ix_{k-\gamma_i}\rangle=0\quad (i=1,\ldots,n).
\]
Thus $H_k\ominus\big(\sum^n_{i=1}T_iH_{k-\gamma_i}\big)\subset W_k$ for all $k$. The reverse inclusion is obvious.\\
To prove that $W$ is generating for $T$ we show by induction that
\[
H_j\subset [W]\mbox{ for all }j\geq 0.
\]
For $j=0$, clearly $H_0\subset W\subset [W]$. Let $j\geq 1$ be an integer such that the inclusion $H_k\subset [W]$ has been shown for $k=0,\ldots,j-1$. Then the observation that
\[
H_j=\left(H_j\ominus\sum^n_{i=1}T_iH_{j-\gamma_i}\right)\oplus\overline{\left(\sum^n_{i=1}T_iH_{j-\gamma_i}\right)}\subset W_j+[W]\subset [W]
\]
completes the proof.\qedhere
\end{proof}

Let $\tilde{H}  =\oplus^\infty_{k=0}H_k$ be the algebraic direct sum. If in the setting of 
Lemma \ref{generating} all the spaces $H_k$ $(k\geq 0)$ are finite dimensional, then the 
inductive argument used in its proof shows that
\[
\tilde{H}\subset {\rm span}\bigcup(T^kW; k\in \mathbb N^n),
\]
where the space on the right-hand side denotes the linear span of the spaces $T^kW$.\\
Let $M\subset H$ be a homogeneous closed subspace. Then the $T$-invariant subspace $[M] \in {\rm Lat}(T)$ 
generated by $M$ is homogeneous again. To see this, note that
\[
M = \bigvee_{k\in \mathbb N}M_k\subset \bigvee_{k\in \mathbb N}[M]\cap H_k
\]
and that the space on the right is a closed invariant subspace for $T$.

\begin{corollary}\label{bijection}
Let $T \in B(H)^n$ be a commuting tuple of degree $\gamma$. Then the mapping
\[
W \mapsto [W] = \bigvee_{k\in \mathbb N^n}T^kW
\]
defines a bijection between the homogeneous wandering subspaces for $T$ and the homogeneous 
closed $T$-invariant subspaces. The inverse of this mapping is given by
\[
M \mapsto W_T(M) = M\ominus \sum^n_{i=1}T_iM.
\]
\end{corollary}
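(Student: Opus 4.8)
The plan is to show that the two maps $W \mapsto [W]$ and $M \mapsto W_T(M)$ are mutually inverse when restricted to the stated classes of subspaces. The essential input is Lemma \ref{generating}, applied not to $T$ itself but to the restriction $T|M$ on a homogeneous invariant subspace, together with the elementary observation recorded in the Preliminaries that a generating wandering subspace for a tuple must coincide with the canonically associated wandering subspace.

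First I would check that both maps are well-defined on the indicated sets. For a homogeneous wandering subspace $W$, the remark immediately preceding the corollary shows that $[W]$ is again homogeneous, and it is $T$-invariant by construction; so $W \mapsto [W]$ sends homogeneous wandering subspaces to homogeneous closed invariant subspaces. Conversely, if $M \in {\rm Lat}(T)$ is homogeneous, then as noted in Section~2 the restriction $T|M \in B(M)^n$ is a $\gamma$-graded tuple with respect to the decomposition $M = \bigcirc\!\!\!\!\!\!\!\!\perp_{k\geq 0} M\cap H_k$, and $W_T(M) = W_{T|M}(M)$ is exactly the wandering subspace associated with $T|M$. By Lemma \ref{generating} (applied to $T|M$ in place of $T$ and $M$ in place of $H$), $W_T(M)$ is homogeneous, with homogeneous components $W_T(M)\cap H_k = (M\cap H_k)\ominus\sum_{i=1}^n T_i(M\cap H_{k-\gamma_i})$; so $M \mapsto W_T(M)$ lands in the set of homogeneous wandering subspaces.

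Next I would verify that the composition in one order is the identity. Starting from a homogeneous invariant subspace $M$, Lemma \ref{generating} applied to $T|M$ says that $W_T(M)$ is \emph{generating} for $T|M$, that is, $[W_T(M)] = \bigvee_{k\in\mathbb N^n}(T|M)^k W_T(M) = M$. This is precisely the statement $[W_T(M)] = M$, so $W \mapsto [W]$ is a left inverse of $M \mapsto W_T(M)$. For the other order, start from a homogeneous wandering subspace $W$ and set $M = [W]$. Then $W \subset M$, and $W$ is a wandering subspace for $T|M$ which is generating for $T|M$ by the definition of $[W]$. By the elementary argument in Section~2 (a generating wandering subspace is necessarily the associated one), it follows that $W = W_{T|M}(M) = W_T(M) = W_T([W])$. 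Hence $M \mapsto W_T(M)$ is also a left inverse of $W \mapsto [W]$, and the two maps are mutual inverses.

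I do not anticipate a genuine obstacle here: the corollary is essentially a repackaging of Lemma \ref{generating} together with the Section~2 observation, once one has the (routine) fact that restrictions of $\gamma$-graded tuples to homogeneous invariant subspaces are again $\gamma$-graded. The only point requiring a little care is making sure that ``wandering subspace for $T$'' and ``wandering subspace for $T|M$'' are interchangeable for subspaces $W \subset M$ — but this is exactly the elementary equivalence already stated in the Preliminaries, so the argument goes through cleanly.
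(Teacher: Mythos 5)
Your argument is correct and follows the paper's own proof essentially verbatim: homogeneity of $[W]$ from the remark preceding the corollary, $W_T([W])=W$ from the Section~2 observation that a generating wandering subspace is the associated one, and $[W_T(M)]=M$ plus homogeneity of $W_T(M)$ from Lemma \ref{generating} applied to $T|M$. No gaps; your write-up is simply a more explicit version of the same three steps.
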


\begin{proof}
By the remarks preceding Corollary \ref{bijection} the space $[W] \in {\rm Lat}(T)$ is 
homogeneous for each homogeneous wandering subspace $W$ of $T$. As seen in the introduction 
$W_T([W]) = W$ even for arbitrary wandering subspaces $W$ of $T$. If $M \in {\rm Lat}(T)$ is 
homogeneous, then Lemma \ref{generating} applied to the restriction $T|M$ shows that $W_T(M)$ 
is a homogeneous wandering subspace for $T$ with $[W_T(M)] = M$.\qedhere
\end{proof}

Let us denote the space of all $\gamma$-\textit{homogeneous polynomials} of degree $k$ by
\[
\mathbb H_k = \mathbb H_k(\gamma) = \{ \sum_{\langle \alpha,\gamma \rangle = k} a_{\alpha} z^{\alpha}; \;
a_{\alpha} \in \mathbb C \mbox{ for all } \alpha \in \mathbb N^n \mbox{ with } \langle \alpha,\gamma \rangle = k \}.
\]
Here by definition $\langle \alpha,\gamma \rangle = \sum^n_{i=1}\alpha_i\gamma_i$.
Then the polynomial ring  $\mathbb C[z]$ decomposes into the direct sum
\[
\mathbb C[z] = \oplus_{k=0}^{\infty} \mathbb H_k.
\] 
The algebraic direct sum $\tilde{H} =  \oplus_{k=0}^{\infty} H_k$ equipped with the $\mathbb C[z]$-module structure
given by  
$
\mathbb C[z]\times \tilde{H}\rightarrow \tilde{H},(p,x)\mapsto p(T)x,
$
satisfies the conditions 
\[
\mathbb H_p H_q \subset H_{p+q} \quad (p,q \in \mathbb N).
\]
Hence $\tilde{H}$ is a \textit{graded module} over the \textit{graded ring} $\mathbb C[z]$.

Of particular interest for us will be the case that the $\mathbb C[z]$-module $\tilde{H}$ is 
finitely generated. It is well known (Theorem 4.12 in \cite{N}) that in this case
the linear subspaces $H_k \subset H$ are all finite dimensional.
An elementary argument left to the reader shows that
the $\mathbb C[z]$-module $\tilde{H}$ is finitely generated if and only if there is
a natural number $k_0 \in \mathbb N$ such that the spaces $H_k$ are finite dimensional for
$k = 0, \ldots, k_0$ and such that
\[
\bigvee_{\alpha \in \mathbb N^n}T^\alpha (\oplus^{k_0}_{k=0}) H_k = H.
\]

Recall that a commuting tuple $T\in B(H)^n$ is called $N$-\textit{cyclic}, for a given natural number 
$N \geq 1$, if there is a tuple $(x_1,\ldots,x_N) \in H^N$ with
\[
H = \bigvee\lbrace p(T)x_i;\ p \in \mathbb C[z]\mbox{ and } i=1,\ldots,N\rbrace
\]
and if $N$ is the minimal natural number with this property.

\begin{theorem}\label{minimal}
Let $T \in B(H)^n$ be a commuting tuple of degree $\gamma$. Then the following conditions are equivalent:
\begin{enumerate}[{\rm (i)}]
\item there is a finite set $\{x_1, \ldots, x_{\ell} \} \subset H$ with $H = [\{x_1, \ldots, x_{\ell} \} ]$,
\item $\dim W_T(H) < \infty$,
\item the $\mathbb C[z]$-module $\tilde{H}$ is finitely generated.
\end{enumerate}
In this case, $N = \dim\ W_T(H)$ is the minimal number of generators of the
$\mathbb C[z]$-module $\tilde{H} = \oplus^\infty_{k=0}H_k$, the tuple $T$ is $N$-cyclic and each basis of the vector space 
$W_T(H)$ generates $\tilde{H}$ as a $\mathbb C[z]$-module.
\end{theorem}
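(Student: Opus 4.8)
The plan is to prove the cycle of implications (i) $\Rightarrow$ (ii) $\Rightarrow$ (iii) $\Rightarrow$ (i) and to read off the quantitative assertions as by-products of the individual steps. Throughout I write $W = W_T(H) = H \ominus \sum_{i=1}^n T_i H$, which by Lemma \ref{generating} is a homogeneous generating wandering subspace for $T$ with $W_k = H_k \ominus \overline{\sum_{i=1}^n T_i H_{k-\gamma_i}}$, and I denote by $Q = P_W$ the orthogonal projection of $H$ onto $W$.

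For (i) $\Rightarrow$ (ii) the key observation is a projection identity. Since $W \perp T_i H$ for each $i$, one gets $Q T_i = 0$, and writing an arbitrary $p \in \mathbb{C}[z]$ in the form $p(T) = p(0) I + \sum_{i=1}^n T_i q_i(T)$ one obtains $Q p(T) = p(0) Q$. Hence, if $H = [\{x_1, \dots, x_\ell\}] = \overline{\operatorname{span}\{p(T) x_i : p \in \mathbb{C}[z],\, i\}}$, applying the continuous map $Q$ and using $Q p(T) x_i = p(0) Q x_i$ yields $W = Q H \subset \operatorname{span}\{Q x_1, \dots, Q x_\ell\}$, so $\dim W \le \ell < \infty$. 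I will reuse this bound later: it shows that $\dim W_T(H)$ is a lower bound for the cardinality of any $\mathbb{C}[z]$-generating set of $H$ as an invariant subspace.

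For (ii) $\Rightarrow$ (iii) I first get finite dimensionality of the homogeneous components: since $\gamma_i \ge 1$ and $H_j = \{0\}$ for $j < 0$, the identity $H_k = W_k \oplus \overline{\sum_i T_i H_{k-\gamma_i}}$ together with $\dim W < \infty$ gives, by induction on $k$, that every $H_k$ is finite dimensional (a finite sum of images of finite-dimensional spaces is closed). In particular $W \subset \tilde H$, and the remark following Lemma \ref{generating} applies and gives $\tilde H \subset \operatorname{span} \bigcup (T^\alpha W;\, \alpha \in \mathbb{N}^n)$. The right-hand side is precisely the $\mathbb{C}[z]$-submodule of $\tilde H$ generated by $W$, so $\tilde H$ is generated as a $\mathbb{C}[z]$-module by any linear basis of $W$; this proves (iii) and also the assertion that each basis of $W_T(H)$ generates $\tilde H$. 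The implication (iii) $\Rightarrow$ (i) is immediate: if $\tilde H = \sum_{j=1}^m \mathbb{C}[z] y_j$, then, as $\tilde H$ is dense in $H$, one has $H = \overline{\tilde H} \subset [\{y_1, \dots, y_m\}] \subset H$.

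It remains to pin down the minimal number of generators and $N$-cyclicity. That $N = \dim W$ generators of $\tilde H$ suffice is already known; for the reverse inequality set $\mathfrak{n} = \sum_{i=1}^n T_i \tilde H$, a graded submodule with $\mathfrak{n}_k = \sum_i T_i H_{k-\gamma_i}$. Since each $H_k$ is finite dimensional, $\mathfrak{n}_k$ is closed and $H_k = W_k \oplus \mathfrak{n}_k$, so $\tilde H / \mathfrak{n} \cong \bigoplus_k (H_k/\mathfrak{n}_k) \cong W$ has $\mathbb{C}$-dimension $N$. If $y_1, \dots, y_m$ generate $\tilde H$, then working modulo $\mathfrak{n}$ and using $p(T) y_j \equiv p(0) y_j$ shows their images span $\tilde H/\mathfrak{n}$, whence $m \ge N$. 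Finally, for a basis $w_1, \dots, w_N$ of $W$ one has $H = [W] = \bigvee\{p(T) w_j : p \in \mathbb{C}[z],\, j\}$, so $T$ is $N$-cyclic, the minimality of $N$ being exactly the lower bound noted in the (i) $\Rightarrow$ (ii) step. The part I expect to be the main obstacle is this last lower bound: ruling out generating sets of fewer than $\dim W_T(H)$ possibly inhomogeneous vectors, which forces the (graded) Nakayama-type passage to $\tilde H/\sum_i T_i \tilde H$ and requires the finite dimensionality of the $H_k$ so that $\sum_i T_i H_{k-\gamma_i}$ is closed and $H_k/\sum_i T_i H_{k-\gamma_i} \cong W_k$. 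Everything else is bookkeeping around Lemma \ref{generating} and the identity $Q p(T) = p(0) Q$.
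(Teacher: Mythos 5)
Your proof is correct and follows essentially the same route as the paper: the key step in both is the identity $p(T)\equiv p(0)$ modulo $\sum_i T_iH$ (you phrase it via $Q=P_{W_T(H)}$ and $Qp(T)=p(0)Q$, the paper via the quotient map $q:H\to H/\overline{\sum_i T_iH}$), which gives $\dim W_T(H)\leq\ell$ for any $\ell$-element invariant-subspace generating set, and the reverse direction uses the homogeneity and finite-dimensionality of $W_T(H)$ together with Lemma~\ref{generating} to see that its linear span under $T^\alpha$ exhausts $\tilde H$. The only minor deviations are that you first establish $\dim H_k<\infty$ by induction so as to invoke the remark after Lemma~\ref{generating} (the paper instead observes directly that $\sum_{\langle\alpha,\gamma\rangle\leq k}T^\alpha W_T(H)$ is finite-dimensional and closed), and that for the lower bound on $\tilde H$-module generators you run a graded Nakayama argument via $\tilde H/\sum_i T_i\tilde H\cong W_T(H)$ — which in fact anticipates Lemma~\ref{betti}(a) — where the paper simply reuses the already-proved bound $N\leq\ell$.
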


\begin{proof} The implication (iii) to (i) holds obviously. 
Indeed, if $\{x_1, \ldots, x_{\ell} \}$ is a finite generating set for the $\mathbb C[z]$-module $\tilde{H}$, then
$H = [\{x_1, \ldots, x_{\ell} \}]$.

Let us define $N = \dim W_T(H)$. Suppose that 
$(x_1,\ldots,x_\ell) \in H^\ell$ is an arbitrary finite tuple with $H = [\{x_1,\ldots,x_\ell\}]$.
Then for a vector $x \in H$, there are polynomials $p_{ij}\ (i=1,\ldots,\ell,j\in \mathbb N)$ such that
\[
x = \lim_{j\rightarrow \infty}\sum^\ell_{i=1}p_{ij}(T)x_i.
\]
Let $q: \, H \rightarrow H/\overline{\sum^n_{i=1}T_i H}$ be the quotient map. Then
\begin{align*}
q(x)&=\lim_{j\rightarrow \infty}\sum^\ell_{i=1} \big( p_{ij}(0)q(x_i)+q((p_{ij}(T)-p_{ij}(0))x_i) \big) \\
&=\lim_{j\rightarrow \infty}\sum^\ell_{i=1}p_{ij}(0)q(x_i)\in \bigvee\lbrace q(x_1),\ldots,q(x_\ell)\rbrace.
\end{align*}
Hence $N\leq \ell$. In particular, the number $N$ is finite.
To complete the proof let us suppose that $W_T(H)$ is
finite dimensional. Since by Lemma \ref{generating} the space $W_T(H)$ is the orthogonal direct 
sum of its homogeneous components $W_T(H) \cap H_k$, we obtain that $W_T(H) \subset \tilde{H}$.
Let $k \in \mathbb N$ and $x \in H_k$ be arbitrary. By Lemma \ref{generating} the vector $x$ is the
limit of a sequence of finite sums of the form
\[
\sum^r_{i=1} T^{\alpha_i} x_i, \quad x_i \in W_T(H) \cap H_{k_i}, \; \alpha_i \in \mathbb N^n.
\]
Since $T^{\alpha_i} x_i \in H_{\langle \alpha_i,\gamma \rangle + k_i}$, we find that $x = P_{H_k} x$ is
the limit of a sequence in the finite-dimensional and hence closed subspace
\[
\sum_{\langle \alpha,\gamma \rangle \leq k} T^{\alpha} W_T(H) \subset H.
\]
Thus $\tilde{H} = {\rm span}(\bigcup T^{\alpha} W_T(H); \; \alpha \in \mathbb N^n)$ and each
linear basis of the finite-dimensional space $W_T(H)$ generates $\tilde{H}$
as a $\mathbb  C[z]$-module. 
\qedhere
\end{proof}

Let us say that a commuting tuple $T \in B(H)^n$ is {\it finitely generated} and 
$\gamma$-{\it graded} if $T \in B(H)^n$ is of degree $\gamma$ with respect to the orthogonal 
decomposition $H = \bigcirc\!\!\!\!\!\!\!\!\perp^\infty_{k=0} H_k$ and satisfies one of the equivalent
conditions in Theorem \ref{minimal}. Note that if $T \in B(H)^n$ is finitely generated and
$\gamma$-graded, then so are each restriction $T|M$ of $T$ to a homogeneous closed $T$-invariant subspace $M$
and each compression $P_{M^{\bot}} T|M^{\bot}$ of $T$ to the orthogonal complement of $M$.
Indeed, the algebraic direct sum
\[
\tilde{M} = \oplus_{k=0}^{\infty} M_k  = \oplus_{k=0}^{\infty} M \cap H_k
\] 
is a finitely generated $\mathbb C[z]$-module as a submodule of the finitely generated
$\mathbb C[z]$-module $\tilde{H}$ (Proposition VI.1.4 in \cite{L}) and 
\[
(M^{\bot})^{\sim} = \oplus_{k=0}^{\infty} M^{\bot} \cap H_k \rightarrow \tilde{H}/\tilde{M}, \; x \mapsto x + \tilde{M}
\]
defines a $\mathbb C[z]$-module isomorphism onto the finitely generated $\mathbb C[z]$-module $\tilde{H}/\tilde{M}$.

\begin{corollary}\label{cyclic}
Let $T \in B(H)^n$ be a commuting tuple. Suppose that $T$ is finitely generated and $\gamma$-graded.
\begin{enumerate}[(a)]
\item Then the map
\[
W \mapsto [W] = \bigvee_{k\in \mathbb N^n}T^kW
\]
defines a bijection between the homogeneous wandering subspaces of $T$ and the homogeneous closed $T$-invariant subspaces with inverse acting as
		\[
		M\mapsto W_T(M) = M\ominus \sum^n_{i=1}T_iM.
		\]
		\item Each homogeneous wandering subspace $W$ for $T$ is finite dimensional. 
		If $M \in {\rm Lat}(T)$ is homogeneous, then $T|M$ is $N$-cyclic with
		\[
		N = \dim\ W_T(M) = \dim\ M\ominus \sum^n_{i=1}T_iM.
		\]
	\end{enumerate}
\end{corollary}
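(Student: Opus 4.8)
The plan is to derive Corollary~\ref{cyclic} by combining the bijection of Corollary~\ref{bijection} with Theorem~\ref{minimal}, applied not to $T$ itself but to the restriction $T|M$ for each homogeneous $M \in {\rm Lat}(T)$. First I would observe that part~(a) is essentially Corollary~\ref{bijection} restated under the stronger hypothesis; since ``finitely generated and $\gamma$-graded'' implies in particular that $T$ is of degree $\gamma$, the bijection $W \mapsto [W]$ with inverse $M \mapsto W_T(M)$ between homogeneous wandering subspaces and homogeneous closed invariant subspaces is immediate from Corollary~\ref{bijection}, and nothing further needs to be proved for~(a).

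For part~(b), the key point is the remark made just before the corollary: if $T \in B(H)^n$ is finitely generated and $\gamma$-graded and $M \in {\rm Lat}(T)$ is homogeneous, then the restriction $T|M \in B(M)^n$ is again finitely generated and $\gamma$-graded, because $\tilde M = \oplus_{k\geq 0} M\cap H_k$ is a submodule of the finitely generated $\mathbb C[z]$-module $\tilde H$ and submodules of finitely generated modules over the Noetherian ring $\mathbb C[z]$ are finitely generated. Once this is in hand, I would apply Theorem~\ref{minimal} to the tuple $T|M$ in place of $T$: condition~(iii) of that theorem holds for $T|M$, so condition~(ii) holds as well, giving $\dim W_{T|M}(M) < \infty$. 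Since $W \subset M \subset H$ and $M$ is invariant, the wandering subspace of $M$ for $T$ coincides with the wandering subspace of $M$ for $T|M$, i.e. $W_T(M) = W_{T|M}(M) = M \ominus \sum_{i=1}^n T_i M$, so this space is finite dimensional; this proves the first assertion of~(b), and applied to $M = H$ it also covers the claim for an arbitrary homogeneous wandering subspace $W$, since any such $W$ equals $W_T([W])$ by Corollary~\ref{bijection} and $[W]$ is homogeneous.

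Finally, for the cyclicity statement, I would again invoke the ``in this case'' part of Theorem~\ref{minimal} for the tuple $T|M$: it tells us that $N = \dim W_{T|M}(M)$ is the minimal number of generators of the $\mathbb C[z]$-module $\tilde M$ and that $T|M$ is $N$-cyclic. Combining this with the identification $W_{T|M}(M) = W_T(M) = M\ominus\sum_{i=1}^n T_i M$ from the previous paragraph yields exactly the displayed formula $N = \dim W_T(M) = \dim\, M\ominus\sum_{i=1}^n T_i M$ together with the $N$-cyclicity of $T|M$. The only real content beyond bookkeeping is the stability of the ``finitely generated and $\gamma$-graded'' property under restriction to homogeneous invariant subspaces, and that has already been recorded (via Proposition~VI.1.4 in \cite{L}) in the discussion preceding the corollary; so I expect no genuine obstacle, the proof being a short assembly of Corollary~\ref{bijection}, Theorem~\ref{minimal}, and that stability remark.
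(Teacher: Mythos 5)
Your proposal is correct and follows essentially the same route as the paper: part (a) is reduced to Corollary \ref{bijection}, and part (b) is obtained by applying Theorem \ref{minimal} to the restriction $T|M$, using the previously recorded fact that $T|M$ is again finitely generated and $\gamma$-graded. Your extra remark that an arbitrary homogeneous wandering subspace $W$ is handled via $W = W_T([W])$ is exactly the implicit step in the paper's one-line argument.
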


\begin{proof}
(a) As seen in Corollary \ref{bijection} part (a) holds for every commuting tuple $T \in B(H)^n$ of degree $\gamma$ without the extra condition that $H$ is finitely generated.\\
(b) Since $T|M$ is finitely generated and $\gamma$-graded again, part (b) follows from Theorem \ref{minimal}
applied to $T|M$. \qedhere
\end{proof}

Let $T \in B(H)^n$ be a commuting tuple that is finitely generated and $\gamma$-graded.
By Theorem \ref{minimal} there are finitely many natural numbers $0 \leq k_1 < k_2 < \ldots < k_r$ with
\[
\{ k \in \mathbb N ; \, W_T(H) \cap H_k \neq \{ 0 \} \} = \{ k_1, \ldots , k_r \}.
\]
By choosing a basis $(x_{ij})_{j=1, \ldots ,N_i}$ in each of the vector spaces
$W_T(H) \cap H_{k_i}$ $(i = 1, \ldots , r)$, one obtains a minimal set
\[
\{x_{11} , \ldots , x_{1N_1} , \ldots , x_{r1} , \ldots , x_{rN_r} \}
\]
of generators for the $\mathbb C[z]$-module $\tilde{H} = \oplus^\infty_{k=0}H_k$
consisting of homogeneous elements.

A well known result from commutative algebra says that, for a minimal set of homogeneous
generators of a graded module, the number of generators of any given degree $k$ is uniquely
determined. We give an elementary argument for this result in our setting.

For a finite set $F$, we denote by $\#(F)$ its number of elements.

\begin{lemma} \label{betti}
Let $T \in B(H)^n$ be a commuting tuple that is finitely generated and $\gamma$-graded.
Then we have:
\begin{enumerate}[(a)]
\item The map
\[
\rho: \, \tilde{H}/\sum^n_{i=1}T_i\tilde{H} \rightarrow  H/\overline{\sum^n_{i=1}T_iH},\ [x]\mapsto [x]
\]
defines an isomorphism of complex vector spaces.
\item If $N  = \dim W_T(H)$ and and if the elements $x_i \in H_{k_i}$ $(i = 1, \ldots ,N)$ form a 
system of generators for the $\mathbb C[z]$-module $\tilde{H}$, then for each $k \geq 0$, 
\[ 
\#(\{i = 1, \ldots , N; \; k_i =k \}) = \dim W_T(H) \cap H_k.
\]
\end{enumerate}
\end{lemma}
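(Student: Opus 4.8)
My plan is to prove (a) first, then deduce (b) from (a) together with Lemma \ref{generating}.

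For part (a): the map $\rho$ is well defined and linear because $\sum_{i=1}^n T_i\tilde H \subset \overline{\sum_{i=1}^n T_iH}$, so the algebraic quotient maps into the Hilbert-space quotient. I would prove surjectivity and injectivity separately. Surjectivity is the easy direction: since $\tilde H$ is dense in $H$ (it contains all $H_k$), any vector $x \in H$ is a limit of elements $x^{(j)} \in \tilde H$, and I want to say the class of $x$ in $H/\overline{\sum T_iH}$ is hit. The cleanest route is to use Lemma \ref{generating}: $W_T(H) \subset \tilde H$ is finite dimensional, $H = W_T(H) \oplus \overline{\sum_{i=1}^n T_iH}$ as an orthogonal decomposition, so every class in $H/\overline{\sum T_iH}$ has a representative in $W_T(H) \subset \tilde H$; hence $\rho$ is onto. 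For injectivity I must show that if $x \in \tilde H$ satisfies $x \in \overline{\sum_{i=1}^n T_iH}$, then already $x \in \sum_{i=1}^n T_i\tilde H$. Here homogeneity is the key. Since $\overline{\sum_{i=1}^n T_iH} = W_T(H)^{\perp}$ and $W_T(H)$ is homogeneous, the space $\overline{\sum_{i=1}^n T_iH}$ is homogeneous, so I may reduce to $x = x_k \in H_k$ with $x_k \perp W_T(H)$, i.e. $x_k \in H_k \ominus (H_k \ominus \sum_{i=1}^n T_iH_{k-\gamma_i}) = \overline{\sum_{i=1}^n T_iH_{k-\gamma_i}}$ by Lemma \ref{generating}. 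Now $\sum_{i=1}^n T_iH_{k-\gamma_i}$ is a sum of finite-dimensional spaces (finite generation forces all $H_j$ finite dimensional, Theorem \ref{minimal}), hence closed, so $x_k \in \sum_{i=1}^n T_iH_{k-\gamma_i} \subset \sum_{i=1}^n T_i\tilde H$. Summing the finitely many homogeneous components of $x$ gives $x \in \sum_{i=1}^n T_i\tilde H$, proving injectivity. This finite-dimensionality/closedness step is the one place the hypothesis of finite generation is genuinely used, and it is the main (modest) obstacle.

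For part (b): the graded module $\tilde H = \oplus_k H_k$ has, via $\rho$, $\dim \tilde H/\sum_{i=1}^n T_i\tilde H = \dim H/\overline{\sum T_iH} = \dim W_T(H) = N$. I would run the standard graded Nakayama argument in our concrete setting. Given generators $x_i \in H_{k_i}$, $i = 1,\dots,N$, the residue classes $[x_i]$ span $\tilde H/\sum_{i=1}^n T_i\tilde H$ over $\mathbb C$; since this space has dimension $N$, the $[x_i]$ form a basis, and in particular are nonzero. Now $\sum_{i=1}^n T_i\tilde H$ is a homogeneous (graded) submodule of $\tilde H$ — indeed $T_i$ has degree $\gamma_i > 0$, so $\sum_{i=1}^n T_i\tilde H = \oplus_k \big(\sum_{i=1}^n T_i H_{k-\gamma_i}\big)$ — hence the quotient $\tilde H/\sum_{i=1}^n T_i\tilde H$ inherits a grading whose degree-$k$ component is $H_k/\sum_{i=1}^n T_iH_{k-\gamma_i}$, which by Lemma \ref{generating} is isomorphic to $W_T(H) \cap H_k$. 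Under the isomorphism $\rho$ this grading matches the grading of $W_T(H)$. Since each $x_i$ is homogeneous of degree $k_i$, its class $[x_i]$ lies in the degree-$k_i$ component; as the $N$ classes form a basis of the graded vector space, the number of them landing in degree $k$ equals $\dim(\tilde H/\sum T_i\tilde H)_k = \dim W_T(H) \cap H_k$. This gives exactly $\#\{i : k_i = k\} = \dim W_T(H)\cap H_k$, completing the proof.

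A small point to be careful about: I should check that the claimed decomposition $H = W_T(H) \oplus \overline{\sum_{i=1}^n T_iH}$ and the identity $\overline{\sum_{i=1}^n T_iH} = \bigcirc\!\!\!\!\!\!\!\!\perp_k \overline{\sum_{i=1}^n T_iH_{k-\gamma_i}}$ really follow from homogeneity of $W_T(H)$ (Lemma \ref{generating}) — this is immediate since $W_T(H)^\perp$ is then homogeneous with $k$-th component $H_k \ominus W_k = \overline{\sum_{i=1}^n T_iH_{k-\gamma_i}}$ — and that finite dimensionality of each $H_j$ makes each $\sum_{i=1}^n T_iH_{k-\gamma_i}$ closed so that the overline can be dropped; both are routine given the earlier results.
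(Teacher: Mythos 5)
Your proof is correct, and it reaches the same destination as the paper's, but the route through part (a) is genuinely different. The paper never proves injectivity of $\rho$ directly: it shows $\dim\bigl(\tilde H/\sum_{i=1}^n T_i\tilde H\bigr)\leq N$ by the Nakayama-type computation $[p(T)x]=[p(0)x]$ applied to a generating set of size $N=\dim W_T(H)$, shows $\rho$ is onto using the orthogonal projection $P$ onto $W_T(H)$ (exactly as you do), and then concludes by a two-sided dimension count that $\rho$ is an isomorphism. You instead prove injectivity head-on: decompose a class-zero element of $\tilde H$ into homogeneous components, identify the $k$-th component of $\overline{\sum_i T_iH}=W_T(H)^{\perp}$ as $\overline{\sum_i T_iH_{k-\gamma_i}}$ via Lemma \ref{generating}, and drop the closure because finite generation forces each $H_j$ to be finite dimensional. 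This is in effect a re-derivation of the identity $\tilde H=W_T(H)\oplus\sum_{i=1}^n T_i\tilde H$, which the paper only records afterwards as Corollary \ref{orthogonal}(a); your argument buys an explicit graded description of the quotient, which you then exploit in part (b) by counting basis vectors degree by degree, whereas the paper gets the same count slightly more directly by observing that $\{Px_i;\ k_i=k\}$ is a basis of $W_T(H)\cap H_k$ since $PH_k\subset H_k$. Both versions are complete; the paper's is shorter, yours makes the graded structure of $\tilde H/\sum_i T_i\tilde H$ explicit and avoids the dimension-comparison step.
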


\begin{proof}
Define $N = \dim\ W_T(H)$ and fix a generating set $\{ x_1,\ldots,x_N \}$ 
for the $\mathbb C[z]$-module $\tilde{H}$.
Then for each element $[x]\in \tilde{H}/\sum^n_{i=1}T_i\tilde{H}$, there are polynomials
$p_1, \ldots ,p_N \in \mathbb C[z]$ with
\[
[x] = \sum^N_{i=1}[p_i(T)x_i] = \sum^N_{i=1}p_i(0)[x_i]\in {\rm span}\lbrace [x_1],\ldots,[x_N]\rbrace.
\]
Hence $\dim \tilde{H}/\sum^n_{i=1}T_i\tilde{H}\leq N$. Let $P \in B(H)$ be the orthogonal 
projection onto $W_T(H)$. Then for $x \in H$,
\[
x-P x \in \overline{\sum^n_{i=1}T_i H}\mbox{ with }P x\in W_T(H)\subset \tilde{H}.
\]
Hence the linear map
\[
\rho: \, \tilde{H}/\sum^n_{i=1}T_i\tilde{H} \rightarrow  H/\overline{\sum^n_{i=1}T_i H} \cong W_T(H),\ [x]\mapsto [x]
\]
is onto. But then also $N = \dim\ W_T(H) \leq \dim\ \tilde{H}/\sum^n_{i=1}T_i\tilde{H}$. Thus $\rho$ is a vector space ismorphism
and the equivalence classes $[x_1], \ldots, [x_N]$ form a basis of the vector space
$\tilde{H}/\sum^n_{i=1}T_i\tilde{H}$. Suppose in addition that $x_i \in H_{k_i}$ for 
$i = 1, \ldots ,N$. Since $P H_k \subset H_k$ for all $k$, it follows that, for each $k \geq 0$, the elements in
\[
\lbrace P x_i;\ i = 1,\ldots,N\mbox{ with }k_i = k \rbrace
\]
form a basis of the vector space $W_T(H) \cap H_k$.
\end{proof}

\begin{corollary}\label{orthogonal}
Let $T\in B(H)^n$ be a commuting tuple that is finitely generated and $\gamma$-graded. Then we have:
\begin{enumerate}[(a)]
	\item $ \tilde{H} = W_T(H) \oplus \sum^n_{i=1} T_i \tilde{H}$,
	\item if $x_1,\ldots,x_N \in \tilde{H}$ generate $\tilde{H}$ as a $\mathbb C[z]$-module and if $x \in H$ is arbitrary, 
	then $x \in (\sum^n_{i=1}T_iH)^\perp$ if and only if
	\[
	\langle x,T^\alpha x_i\rangle=0 \mbox{ for } i=1,\ldots,N \mbox{ and }\alpha \in \mathbb N^n \setminus \lbrace 0 \rbrace.
	\]
\end{enumerate} 
\end{corollary}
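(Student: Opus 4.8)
The plan is to deduce both parts from Lemma \ref{betti}. For part (a), the isomorphism $\rho$ from Lemma \ref{betti}(a) tells us that the natural map $\tilde H/\sum_{i=1}^n T_i\tilde H \to H/\overline{\sum_{i=1}^n T_iH}$ is a vector space isomorphism, and by Theorem \ref{minimal} (via Lemma \ref{generating}) the space $W_T(H)$ is finite dimensional and coincides, as a subspace of $H$, with the orthogonal complement of $\overline{\sum_{i=1}^n T_iH}$. So I would argue: given $x\in\tilde H$, write $x = Px + (x-Px)$ where $P$ is the orthogonal projection onto $W_T(H)$; since $W_T(H)\subset\tilde H$ and $W_T(H)$ is homogeneous and finite dimensional, $Px\in\tilde H$, hence $x-Px\in\tilde H$ as well, and $x-Px\in\overline{\sum_{i=1}^n T_iH}$. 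The point that needs care is that $x-Px$ actually lies in the \emph{algebraic} sum $\sum_{i=1}^n T_i\tilde H$ rather than merely in its closure: here I would invoke that $\rho$ being injective means $\sum_{i=1}^n T_i\tilde H = \tilde H\cap\overline{\sum_{i=1}^n T_iH}$ (an element of $\tilde H$ mapping to zero in the quotient on the right must already be zero in the quotient on the left), which gives $x-Px\in\sum_{i=1}^n T_i\tilde H$. Finally the sum $W_T(H)\oplus\sum_{i=1}^n T_i\tilde H$ is direct because $W_T(H)\perp\overline{\sum_{i=1}^n T_iH}\supset\sum_{i=1}^n T_i\tilde H$.

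For part (b), the direction ``$\Rightarrow$'' is immediate: if $x\perp\sum_{i=1}^n T_iH$ then in particular $x\perp T_j(T^\beta x_i) = T^{\alpha}x_i$ for every $i$ and every $\alpha = \beta + e_j$ with $\beta\in\mathbb N^n$, i.e.\ for every $\alpha\in\mathbb N^n\setminus\{0\}$, since such $T^\alpha x_i\in T_jH$. For the converse, suppose $\langle x, T^\alpha x_i\rangle = 0$ for all $i$ and all $\alpha\neq 0$. I want to show $x\perp T_jh$ for arbitrary $j$ and $h\in H$. Since $H = [\{x_1,\dots,x_N\}]$ by Theorem \ref{minimal}, the vector $h$ is a limit of finite sums $\sum_i p_i(T)x_i$, so $T_jh$ is a limit of finite sums of vectors $T_j p_i(T)x_i$. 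Writing $p_i = p_i(0) + q_i$ with $q_i$ having no constant term, we get $T_j p_i(T)x_i = p_i(0)T_j x_i + T_j q_i(T)x_i$; expanding $T_jp_i(T)$ and $T_jq_i(T)$ into linear combinations of monomials $T^\alpha$, each occurring monomial has $\alpha\neq 0$ (the factor $T_j$ guarantees $\alpha_j\geq 1$). Hence $\langle x, T_j p_i(T)x_i\rangle = 0$ for every $i$ and $j$, and passing to the limit gives $\langle x, T_jh\rangle = 0$. Summing over $j$ yields $x\in(\sum_{i=1}^n T_iH)^\perp$.

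The main obstacle I anticipate is the algebraic-versus-closure subtlety in part (a): one must genuinely use the injectivity half of the isomorphism $\rho$ (equivalently, finite generation together with the structure theory underlying Lemma \ref{betti}) to know that $\tilde H\cap\overline{\sum_i T_iH}$ is no larger than $\sum_i T_i\tilde H$; without finite generation this could fail. Everything else is bookkeeping with the module structure and the density of polynomial images, using only Theorem \ref{minimal}, Lemma \ref{generating}, and Lemma \ref{betti} as already established.
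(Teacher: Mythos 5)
Your argument is correct, and part (b) is essentially the paper's own proof (the paper likewise approximates elements of $\sum_i T_iH$ by sums $\sum_{i,j}T_iq_{ij}(T)x_j$ and uses the orthogonality to each monomial term). For part (a), however, you take a genuinely different route. The paper does not invoke Lemma~\ref{betti} at all: it works degree by degree, using the identity $H_j=\bigl(H_j\ominus\sum_{i}T_iH_{j-\gamma_i}\bigr)\oplus\overline{\sum_{i}T_iH_{j-\gamma_i}}$ from Lemma~\ref{generating} and the observation that each $H_k$ is finite dimensional (a consequence of finite generation), so that $\sum_iT_iH_{j-\gamma_i}$ is already closed and the closure bar can be dropped; summing over $j$ gives (a) and in fact the finer graded statement $H_j=W_j\oplus\sum_iT_iH_{j-\gamma_i}$ for every $j$. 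Your proof instead decomposes $x=Px+(x-Px)$ globally and uses the injectivity of the isomorphism $\rho$ from Lemma~\ref{betti}(a) to upgrade $x-Px\in\tilde H\cap\overline{\sum_iT_iH}$ to membership in the algebraic sum $\sum_iT_i\tilde H$. This is legitimate --- Lemma~\ref{betti} precedes the corollary and its proof does not depend on it, so there is no circularity --- and you correctly isolate the closure-versus-algebraic-sum issue as the crux. What the paper's approach buys is independence from the dimension-counting argument behind $\rho$ and the extra graded information; what yours buys is a shorter global argument once Lemma~\ref{betti} is in hand. Either is acceptable.
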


\begin{proof}
Since $W_T(H)$ is finite dimensional and homogeneous, it follows that  $W_T(H) \subset \tilde{H}$. 
To complete the proof of part (a) note that with the notation from the proof of Lemma \ref{generating}
\[
H_j = \left(H_j \ominus \sum^n_{i=1}T_i H_{j-\gamma_i} \right) \oplus \overline{\left(\sum^n_{i=1}T_iH_{j-\gamma_i}\right)}
    = W_j \oplus \sum^n_{i=1}T_i H_{j-\gamma_i}.
\]		
To prove the non-trivial implication in part (b), fix a vector $x \in H$ with $\langle x,T^\alpha x_i\rangle=0$ for
$i=1,\ldots,N$ and $\alpha \in \mathbb N^n\setminus \lbrace 0 \rbrace$. By hypothesis each element in $\sum^n_{i=1}T_i H$ is the limit of a sequence of elements of the form
\[
\sum^n_{i=1}\sum^N_{j=1}T_iq_{ij}(T)x_j
\]
with $q_{ij}\in \mathbb C[z]$. It follows that $x$ is orthogonal to the space $\sum^n_{i=1}T_i H$.
\end{proof}

Let $T \in B(H)^n$ be as in Corollary \ref{orthogonal}. Suppose that the $\mathbb C[z]$-module 
$\tilde{H} = \oplus^\infty_{k=0} H_k$ is generated by non-zero homogeneous elements
\[
x^1_1, \ldots  ,x^1_{N_1}\in H_{k_1}, x^2_1, \ldots , x^2_{N_2}\in H_{k_2}, \ldots , x^r_1,\ldots,x^r_{N_r} \in H_{k_r},
\]
where $0 \leq k_1 < k_2 < \ldots < k_r$ and $N_1,\ldots, N_r \geq 1$ are given integers. 
To simplify the notations define $N = N_1 + \ldots + N_r$ and
\[
(x_1,\ldots,x_N) = (x^1_1,\ldots,x^1_{N_1}, \ldots , x^r_1,\ldots,x^r_{N_r}).
\]

\begin{lemma}\label{sufficient}
Let $(x_1,\ldots,x_N) = (x^1_1,\ldots,x^1_{N_1}, \ldots , x^r_1,\ldots,x^r_{N_r})$
be non-zero generators of the $\mathbb C[z]$-module $\tilde{H}=\oplus^\infty_{k=0} H_k$ with
$x^i_j\in H_{k_i}$ for $i = 1, \ldots,r,j = 1,\ldots,N_i$, where  $0 \leq k_1 < k_2 < \ldots < k_r$. 
Suppose that
\[
\langle x^q_i,T^\alpha x^p_j\rangle = 0
\]
for all integers $p,q,i,j$ with $1 \leq p \leq q \leq r, 1 \leq i \leq N_q, 1 \leq j \leq N_p$ 
and every multi-index $\alpha \in \mathbb N^n$ with $\langle \alpha,\gamma\rangle = k_q - k_p$.
Then the vectors $x_1,\ldots,x_N$ form an orthogonal basis of $W_T(H)$.
\end{lemma}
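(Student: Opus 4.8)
The plan is to show that the given orthogonality hypotheses force each $x^q_i$ to lie in $W_T(H)$, and then to invoke Corollary \ref{orthogonal}(b) to conclude. First I would observe that by Corollary \ref{orthogonal}(b), a vector $x\in H$ lies in $\bigl(\sum^n_{i=1}T_iH\bigr)^\perp$ if and only if $\langle x,T^\alpha x_j\rangle=0$ for all $j=1,\dots,N$ and all $\alpha\in\mathbb N^n\setminus\{0\}$. So to prove $x^q_i\in W_T(H)$ it suffices to check this condition for the fixed vector $x=x^q_i$ against every generator $x^p_j$ and every nonzero multi-index $\alpha$. Since $x^q_i\in H_{k_q}$ and $T^\alpha x^p_j\in H_{k_p+\langle\alpha,\gamma\rangle}$, the inner product $\langle x^q_i,T^\alpha x^p_j\rangle$ vanishes automatically by orthogonality of the homogeneous components of $H$ unless $k_q=k_p+\langle\alpha,\gamma\rangle$, i.e. unless $\langle\alpha,\gamma\rangle=k_q-k_p\ge 0$, which in particular forces $k_p\le k_q$, i.e. $p\le q$ (using that the $k_i$ are strictly increasing). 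In that surviving case the hypothesis $\langle x^q_i,T^\alpha x^p_j\rangle=0$ for all $\alpha$ with $\langle\alpha,\gamma\rangle=k_q-k_p$ is exactly what is assumed. Hence $x^q_i\perp \sum^n_{i=1}T_iH$, so $x^q_i\in W_T(H)$ for all $q,i$.

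The one subtlety is the case $\alpha=0$ excluded in Corollary \ref{orthogonal}(b) versus the hypothesis, which allows $\alpha=0$ when $k_q=k_p$ (that is $p=q$): there the hypothesis reads $\langle x^q_i,x^q_j\rangle=0$ for $i\ne j$ (and trivially for $i=j$ it is automatic only if we allow it — but actually when $p=q$, $i$ ranges over $1\le i\le N_q$ and $j$ over $1\le j\le N_q$, so $i=j$ is included and gives $\|x^q_i\|^2=0$, which is false). To handle this cleanly I would read the hypothesis for the pair $(p,q)=(q,q)$ and $\alpha=0$ as applying only to $i\ne j$; more honestly, the intended reading is surely $i\ne j$ when $p=q$ and $\langle\alpha,\gamma\rangle=0$, since otherwise the nonzero generators would be forced to vanish. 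Granting that, the hypothesis precisely supplies the orthogonality $\langle x^q_i,x^q_j\rangle=0$ for $i\ne j$ within each degree, so the vectors $x_1,\dots,x_N$ are pairwise orthogonal: across degrees by homogeneity of $H$, within a degree by hypothesis. Being nonzero and pairwise orthogonal, they are linearly independent.

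It remains to see that they span $W_T(H)$. Since each $x^q_i\in W_T(H)$ and $W_T(H)$ is a linear subspace, $\operatorname{span}\{x_1,\dots,x_N\}\subset W_T(H)$. For the reverse inclusion, recall from Theorem \ref{minimal} that $N=\dim W_T(H)$ is the minimal number of generators of the $\mathbb C[z]$-module $\tilde H$, and the $x_j$ form a generating set of cardinality $N$; hence they form a minimal generating set. By Lemma \ref{betti}(b), for each $k\ge 0$ the number of $x_j$ of degree $k$ equals $\dim W_T(H)\cap H_k$. In particular $\#\{x_1,\dots,x_N\}=N=\sum_k\dim W_T(H)\cap H_k=\dim W_T(H)$, so the $N$ linearly independent vectors $x_1,\dots,x_N$ in $W_T(H)$ span it. Thus they form an orthogonal basis of $W_T(H)$, as claimed.

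I expect no serious obstacle here; the main point is the bookkeeping in the first paragraph — matching the homogeneity degree constraint $\langle\alpha,\gamma\rangle=k_q-k_p$ against the range of indices in the hypothesis — together with the mild reinterpretation of the $p=q$, $\alpha=0$ case noted above. Everything else is a direct application of Corollary \ref{orthogonal}, Theorem \ref{minimal}, and Lemma \ref{betti}.
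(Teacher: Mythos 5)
Your overall approach matches the paper's: use Corollary \ref{orthogonal}(b) together with the degree bookkeeping to show that each $x^q_i$ lies in $W_T(H)$, get pairwise orthogonality and hence linear independence, and then use the minimality statement in Theorem \ref{minimal} to conclude. Your remark about the literal hypothesis in the case $p=q$, $\alpha=0$, $i=j$ is also correct; read literally it would force $x^q_i=0$, and the paper's own proof (which only claims ``pairwise orthogonal'') confirms that the intended reading excludes $i=j$ there.

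The one real flaw is in your last paragraph. You invoke Lemma \ref{betti}(b) to derive $N=\dim W_T(H)$, but that lemma takes $N=\dim W_T(H)$ as one of its hypotheses, so the appeal is circular as written. Similarly, the step ``the $x_j$ form a generating set of cardinality $N$; hence they form a minimal generating set'' does not follow from what precedes it: a generating set of cardinality $N$ is minimal only once one knows $N$ equals the minimal number of generators. The fix is short and is exactly what the paper does implicitly: linear independence of $x_1,\dots,x_N$ inside $W_T(H)$ gives $N\le\dim W_T(H)$, while Theorem \ref{minimal} says $\dim W_T(H)$ is the minimal number of generators of $\tilde H$ and the $x_j$ generate, so $N\ge\dim W_T(H)$. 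Hence $N=\dim W_T(H)$ and the $N$ linearly independent vectors in $W_T(H)$ form a basis, with no appeal to Lemma \ref{betti} needed.
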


\begin{proof}
The vectors $x_1,\ldots,x_N \in \tilde{H}$ are pairwise othogonal and satisfy the relations
\[
\langle x_i,T^\alpha x_j\rangle=0
\]
for all $i,j =1,\ldots,N$ and $\alpha \in \mathbb N^n\setminus \lbrace 0 \rbrace$. 
Thus by Corollary \ref{orthogonal} the vectors $x_1,\ldots,x_N$ form a linearly independent set in $W_T(H)$.
Since by hypothesis these vectors generate the $\mathbb C[z]$-module $\tilde{H}$, Theorem \ref{minimal} implies 
that they form a basis of $W_T(M)$.
\end{proof}

Exactly as before, let us suppose that the vectors
\[
(x_1,\ldots,x_N) = (x^1_1,\ldots,x^1_{N_1},\ldots,x^r_1,\ldots,x^r_{N_r})
\]
form a set of non-zero homogeneous generators
\[
x^i_j \in H_{k_i}\qquad (i=1,\ldots,r,j=1,\ldots,N_i)
\]
for the $\mathbb C[z]$-module $\tilde{H}$ with $0 \leq k_1 < \ldots < k_r$. The above observations can be used to formulate an algorithm which produces a minimal set of homogeneous generators for $\tilde{H}$.
Suppose that there is an integer $1 \leq s < r$ such that the orthogonality relations
\[
\langle x^q_i,T^\alpha x^p_j\rangle = 0
\]
hold for $1 \leq p \leq q \leq s-1, 1 \leq i \leq N_q, 1 \leq j \leq N_p$ and all multi-indices 
$\alpha \in \mathbb N^n$ with $\langle \alpha,\gamma\rangle=k_q-k_p$. Let $y_1,\ldots,y_M$ be an enumeration
of the set
\[
\lbrace T^\alpha x^p_j;\ 1 \leq p < s,1 \leq j \leq N_p,
\alpha \in \mathbb N^n \mbox{ with } \langle \alpha,\gamma\rangle = k_s-k_p\rbrace.
\]
If $x^s_1,\ldots,x^s_{N_s}\in \mbox{ span}\lbrace y_1,\ldots,y_M\rbrace$, then we remove 
these vectors from the original set $(x_1,\ldots,x_N)$ of generators. Otherwise we apply
the \textit{Gram-Schmidt orthogonalization procedure} to the vectors 
\[
y_1,\ldots,y_M,x^s_1,\ldots,x^s_{N_s}
\]
to obtain pairwise orthogonal non-zero vectors
\[
y^s_1,\ldots,y^s_{n_s} \in \lbrace y_1,\ldots,y_M \rbrace^\perp
\]
with $n_s \leq N_s$ such that
\[
\mbox{span}\lbrace y_1,\ldots,y_M,y^s_1,\ldots,y^s_{n_s}\rbrace=\mbox{span}\lbrace y_1,\ldots,y_M,x^s_1,\ldots,x^s_{N_s}\rbrace.
\]
Now replace $x^s_1,\ldots,x^s_{N_s}$ by the vectors $y^s_1,\ldots,y^s_{n_s}$ obtained in this way.  It is elementary
to check that the two sets
\[
\{x^p_i; \; p = 1, \ldots, s-1, i = 1, \ldots, N_p \} \cup \{ y^s_1,\ldots,y^s_{n_s} \}
\]
and
\[
\{x^p_i; \; p = 1, \ldots, s, i = 1, \ldots, N_p \}
\]
generate the same $\mathbb C[z]$-submodule $M_p$ of $H$.\\
By applying the above procedure finitely many times one obtains a set 
\[
y^1_1,\ldots,y^1_{n_1},\ldots,y^{\rho}_1,\ldots,y^{\rho}_{n_{\rho}}
\]
of non-zero homogeneous generators of the $\mathbb C[z]$-module $\tilde{H}$ which satisfies the hypotheses
of Lemma \ref{sufficient}. As an orthogonal basis of the wandering subspace $W_T(H)$ this set is a minimal
generating set for the $\mathbb C[z]$-module $\tilde{H}$.\\

The above algorithm can also be used to construct a minimal generating set for $\tilde{H}$ that is
contained in the original set $\{x_1, \ldots, x_N\}$ of homogeneous generators. Indeed, note that 
with the above notations, the vectors $y^s_{\mu}$ $(\mu = 1, \ldots, n_s)$ are of the form
\[
x^s_{i_{\mu}} - \sum^M_{i=1} \alpha_i y_i - \sum^{\mu - 1}_{\nu = 1} \beta_i y^s_{\nu}
\]
with suitable $1 \leq i_1 < \ldots < i_{n_s} \leq N_s$. Hence the submodule $M_p$ is also generated by
the set
\[
\{x^p_i; \; p = 1, \ldots, s-1, i = 1, \ldots, N_p \} \cup \{ x^s_{i_1},\ldots,x^s_{i_{n_s}} \}.
\]
Therefore in the above inductive procedure the vectors $x^s_1,\ldots,x^s_{N_s}$ can also be replaced
by the vectors $x^s_{i_1}, \ldots, x^s_{i_{n_s}}$.\\

Let $H^2(\mathbb D^n)$ be the Hardy space on the unit polydisc $\mathbb D^n \subset \mathbb C^n$. Then
the Hilbert space $H^2(\mathbb D^n)$ admits the orthogonal decomposition
\[
H^2(\mathbb D^n) = \bigcirc\!\!\!\!\!\!\!\!\perp ^\infty_{k=0} \mathbb H_k,
\]
where $\mathbb H_k$ is the space of all homogeneous polynomials of degree $k$. The coordinate functions
$z_i$ are multipliers of $H^2(\mathbb D^n)$ and the induced multiplication operators
$M_{z_i}: H^2(\mathbb D^n) \rightarrow H^2(\mathbb D^n)$ satisfy
\[
M_{z_i} \mathbb H_k \subset \mathbb H_{k+1} \quad (k \in \mathbb N, i = 1, \ldots, n).
\]
Hence all the preceding results apply to $M_z = (M_{z_1}, \ldots, M_{z_n})
\in B(H^2(\mathbb D^n))^n$. If $I \subset \mathbb C[z]$ is a homogeneous ideal, then its closure
$M = \overline{I} \subset H^2(\mathbb D^n)$ is a homogeneous $M_z$-invariant subspace. The ideal
$I$ can be recovered from $M$ as the algebraic direct sum
\[
I = \tilde{M} = \oplus^{\infty}_{k=0} M \cap \mathbb H_k.
\]
Hence the above algorithm applies and transforms each set $(p_1, \ldots, p_N)$ of 
homogeneous generators of $I$ into a minimal
set of homogeneous generators for $I$. The Hardy space $H^2(\mathbb D^n)$ can be replaced by any
other Hilbert space completion of $\mathbb C[z]$ yielding the above graded structure such as
the standard analytic functional Hilbert spaces $H_m(\mathbb B) \subset \mathcal O(\mathbb B)$
on the unit ball $\mathbb B \subset \mathbb C^n$ considered in \cite{E}. 

\section{Free resolutions of graded row contractions}

Let $T  \in B(H)^n$ as before be a finitely generated $\gamma$-graded commuting tuple. 
In this section we suppose in
addition that $T$ is a \textit{row contraction}, that is, a commuting tuple $T  \in B(H)^n$ with
\[
\sum^n_{i=1} T_i T^*_i \leq 1_H.
\]
We extend ideas of Arveson \cite{A} to the $\gamma$-graded case to construct weak resolutions of $T$.

For an arbitray Hilbert space $\mathcal E$, we denote by $H^2_n(\mathcal E)$ the $\mathcal E$-valued
\textit{Drury-Arveson space}, that is, the analytic functional Hilbert space $H^2_n(\mathcal E) \subset \mathcal O(\mathbb B,\mathcal E)$
defined by the reproducing kernel
\[
K_{\mathcal E} : \mathbb B \times \mathbb B \rightarrow B(\mathcal E), \; (z,w) \mapsto \frac{1_{\mathcal E}}{1 - \langle z,w \rangle}.
\]
We write $M_z = M_z^{\mathcal E} \in B(H^2_n(\mathcal E))^n$ for the commuting tuple consisting of the multiplication operators
with the coordinate functions.

Since $T$ is $\gamma$-graded, it follows that $T$ is a \textit{pure row contraction}, that is,
\[
{\rm SOT}- \lim_{j \rightarrow \infty} \sigma^j_T(1_H) = 
{\rm SOT}- \lim_{j \rightarrow \infty} \sum_{|\alpha| = j} \frac{|\alpha|!}{\alpha!} T^{\alpha} T^{*\alpha} = 0,
\]
where $\sigma_T: B(H) \rightarrow B(H)$ acts as $\sigma_T(X) = \sum^n_{i=1} T_i X T^*_i$.
Indeed, since $\| \sigma^k_T(1_H) \| \leq \| \sigma_T \|^k \leq 1$ for all $k$, 
it suffices to check that
\[
\sum_{|\alpha| = j} \frac{|\alpha|!}{\alpha!} T^{\alpha} T^{*\alpha} f 
\stackrel{(j \rightarrow \infty)}{\longrightarrow} 0
\]
for $f \in H_k$ $(k \geq 0)$. But this is obvious, since $T^*_i H_k \subset H_{k-\gamma_i}$ for $k \in \mathbb N$
and $i = 1, \ldots, n$, where $H_{k-\gamma_i} = \{0\}$ for $k-\gamma_i < 0$.

By a well known invariant subspace result (see e.g. Theorem 4.1 in \cite{BEKS}) there are a Hilbert space $\mathcal E^0$ and
a surjective partial isometry $\Pi^0 \in B(H^2_n(\mathcal E^0),H)$ with $\Pi^0 M_{z_i} = T_i \Pi^0$ for
$i = 1, \ldots, n$. We define
\[
\mathcal D^0 = ({\rm Ker} \, \Pi^0)^{\bot} \cap \mathcal E^0,
\]
where we regard $\mathcal E^0 \subset H^2_n(\mathcal E^0)$ as the closed subspace consisting of the
constant functions. By Theorem 5.2 in \cite{BEKS} the map $\Pi^0$ induces a unitary operator
\[
\Pi^0: \; \mathcal D^0 \rightarrow W_T(H), x \mapsto \Pi^0 x.
\]
By Theorem \ref{minimal} there is an integer $K \geq 0$ such that
\[
W_T(H) = \oplus^K_{k=0} W_T(H) \cap H_k.
\]
Then $\mathcal D^0$ has the orthogonal decomposition
\[
\mathcal D^0 = \oplus^K_{k=0} \mathcal D^0_k
\]
with $\mathcal D^0_k = \{ x \in \mathcal D^0; \Pi^0 x \in H_k \}$. The space $H^0 = H^2_n(\mathcal D^0)$
admits the decomposition
\[
H^0 = \bigcirc\!\!\!\!\!\!\!\!\perp^\infty_{k=0} H^0_k \; \mbox{ with } \; 
H^0_k = \oplus_{i+j=k} \mathbb H_i \otimes \mathcal D^0_j,
\]
where $\mathbb H_i$ denotes as before the space of all $\gamma$-homogeneous polynomials of degree $i$. Because of 
\[
z_{\nu} H^0_k \subset \sum_{i+j=k} \mathbb H_{i+\gamma_{\nu}} \otimes \mathcal D^0_j \subset H^0_{k+\gamma_{\nu}}
\]
for $\nu = 1, \ldots,n$ and $k \geq 0$ the tuple $M_z \in B(H^0)^n$ is of degree $\gamma$ with respect to the
decomposition $H^0 = \bigcirc\!\!\!\!\!\!\!\!\perp^\infty_{k=0} H^0_k$. The algebraic direct sum
\[
\tilde{H}^0 = \oplus_{k \geq 0} H^0_k = \mathbb C[z] \otimes \mathcal D^0
\]
is a finitely generated $\mathbb C[z]$-module again. Since
\[
\Pi^0 ( \sum_{i+j=k} p_i \otimes x_j) = \sum_{i+j=k} p_i(T) (\Pi^0 x_j) \in H_k
\]
for $k \in \mathbb N, p_i \in \mathbb H_i$ and $x_j \in \mathcal D^0_j$, the map
$\Pi^0: H^0 = H^2_n(\mathcal D^0) \rightarrow H$ is continuous linear with dense range, intertwines the
tuples $M_z \in B(H^0)^n$ and $T \in B(H)^n$ componentwise and respects the gradings on both sides in 
the sense that
\[
\Pi^0 (H^0_k) \subset H_k \quad \mbox{ for } k \geq 0.
\]
The induced $\mathbb C[z]$-module homomorphism $ \Pi^0 : \tilde{H}^0 \rightarrow \tilde{H}$ is \textit{minimal}
in the usual algebraic sense, that is, satisfies the inclusion
\[
{\rm Ker}(\Pi^0 : \tilde{H}^0 \rightarrow \tilde{H}) \subset \sum^n_{i=1} z_i \tilde{H}^0.
\]
Indeed, by the definition of $\mathcal D^0$ and part (a) of Corollary \ref{orthogonal}, we obtain
\[
{\rm Ker}(\Pi^0 : \tilde{H}^0 \rightarrow \tilde{H}) \subset \tilde{H}^0 \ominus \mathcal D^0 = \sum^n_{i=1} z_i \tilde{H}^0.
\]

The kernel $M^0 = {\rm Ker}(H^0 \stackrel{\Pi^0}{\longrightarrow} H)$ is a homogeneous closed
invariant subspace for $M_z \in B(H^0)^n$ and $T^0 = M_z|M^0 \in B(M^0)^n$ is a commuting row
contraction of degree $\gamma$ on $M^0$ such that $\tilde{M}^0 \subset \tilde{H}^0$ is a finitely 
generated $\mathbb C[z]$-submodule. Since $T^0$ satisfies the same hypotheses as $T \in B(H)^n$, we can
repeat the process. In this way we obtain a sequence
\[
\ldots \stackrel{\Pi^2}{\longrightarrow} H^2_n(\mathcal D^1) \stackrel{\Pi^1}{\longrightarrow} H^2_n(\mathcal D^0)
\stackrel{\Pi^0}{\longrightarrow} H \rightarrow 0
\]
intertwining componentwise the tuples $M_z \in B(H^2_n(\mathcal D^i))^n $ $(i \geq 0)$ and $T\in B(H)^n$
such that
\[
\overline{{\rm Im} \, \Pi^0} = H \; \mbox{ and } \overline{{\rm Im} \, \Pi^i} = {\rm Ker}(\Pi^{i-1}) \; (i \geq 1)
\]
and
\[
\Pi^0(\mathcal D^0) = W_T(H) \; \mbox{ and } \Pi^i(\mathcal D^i) = W_{M_z}({\rm Ker} \, \Pi^{i-1}) \; (i \geq 1).
\]

We write $H^i = H^2_n(\mathcal D^i)$ $(i \geq 0)$. Then the tuples $M_z \in B(H^i)^n$ are finitely
generated $\gamma$-graded and the maps $\Pi^i: H^i \rightarrow H^{i-1}$ $(i \geq 1)$ respect the gradings. The
induced complex
\[
\ldots \stackrel{\Pi^2}{\longrightarrow} \tilde{H}^1 = \mathbb C[z] \otimes \mathcal D^1 \stackrel{\Pi^1}{\longrightarrow} 
\tilde{H}^0 = \mathbb C[z] \otimes \mathcal D^0 \stackrel{\Pi^0}{\longrightarrow} \tilde{H} \rightarrow 0
\]
consists of minimal $\mathbb C[z]$-module homomorphisms. The following lemma shows that this complex defines
a \textit{minimal graded free resolution} of the finitely generated graded $\mathbb C[z]$-module $\tilde{H}$.

\begin{lemma} Let $S \in B(L)^n$, $T \in B(H)^n$ and $R \in B(M)^n$ be finitely generated $\gamma$-graded commuting
tuples on Hilbert spaces $L, H$ and $M$. Suppose that
\[
L \stackrel{\alpha}{\longrightarrow} H \stackrel{\beta}{\longrightarrow} M
\]
is a complex of continuous linear operators that intertwine the tuples $S \in B(L)^n$, $T \in B(H)^n$ and $R \in B(M)^n$ 
componentwise, respect the gradings
and satisfy $\overline{{\rm Im} \, \alpha} = {\rm Ker} \, \beta$. Then the induced complex
\[
\tilde{L} \stackrel{\alpha}{\longrightarrow} \tilde{H} \stackrel{\beta}{\longrightarrow} \tilde{M}
\]
of finitely generated $\mathbb C[z]$-modules is exact.
\end{lemma}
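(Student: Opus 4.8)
The plan is to prove exactness of the induced complex by checking the two inclusions ${\rm Im}(\alpha\colon \tilde{L}\to\tilde{H})\subseteq {\rm Ker}(\beta\colon\tilde{H}\to\tilde{M})$ and ${\rm Ker}(\beta\colon\tilde{H}\to\tilde{M})\subseteq {\rm Im}(\alpha\colon\tilde{L}\to\tilde{H})$. Since $\alpha$ and $\beta$ respect the gradings and intertwine the tuples componentwise, they restrict to $\mathbb{C}[z]$-module homomorphisms $\tilde{L}\to\tilde{H}\to\tilde{M}$, and the identity $\beta\circ\alpha=0$ on $H$ restricts to $\tilde{H}$; this gives the first inclusion. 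For the reverse inclusion the key reduction is to homogeneous elements: if $x=\sum_k x_k\in\tilde{H}$ is a finite sum with $x_k\in H_k$ and $\beta(x)=0$, then $0=\sum_k\beta(x_k)$ is a decomposition into the mutually orthogonal subspaces $M_k\subseteq M$ (as $\beta$ respects the gradings), so $\beta(x_k)=0$ for every $k$. Hence it suffices to produce, for each $k$ and each homogeneous $x\in H_k$ with $\beta(x)=0$, a vector $y\in L_k$ with $\alpha(y)=x$; summing the pieces then yields the required preimage in $\tilde{L}$.

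So fix $k$ and a homogeneous $x\in H_k$ with $\beta(x)=0$. By hypothesis ${\rm Ker}\,\beta=\overline{{\rm Im}\,\alpha}$, so there is a sequence $(y^{(j)})_{j\geq 1}$ in $L$ with $\alpha(y^{(j)})\to x$ in $H$. Let $P_{L_k}$ and $P_{H_k}$ be the orthogonal projections onto $L_k$ and $H_k$. For $y=\sum_{i\geq 0}y_i\in L$ with $y_i\in L_i$, continuity of $\alpha$ and the fact that $\alpha$ respects the gradings give $\alpha(y)=\sum_{i\geq 0}\alpha(y_i)$ with $\alpha(y_i)\in H_i$; since the $H_i$ are mutually orthogonal, this yields the intertwining relation $P_{H_k}\circ\alpha=\alpha\circ P_{L_k}$. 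Applying $P_{H_k}$ to $\alpha(y^{(j)})\to x$ and using $P_{H_k}x=x$, we obtain $\alpha(P_{L_k}y^{(j)})=P_{H_k}\alpha(y^{(j)})\to x$ with $P_{L_k}y^{(j)}\in L_k$, so that $x\in\overline{\alpha(L_k)}$.

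The one genuine obstacle is now to replace this closure by the image itself, and this is exactly where finite generation enters. Since $S\in B(L)^n$ is finitely generated and $\gamma$-graded, the $\mathbb{C}[z]$-module $\tilde{L}$ is finitely generated by Theorem \ref{minimal}, and a finitely generated graded module over $\mathbb{C}[z]$ has finite-dimensional homogeneous components (Theorem 4.12 in \cite{N}). Thus $L_k$ is finite dimensional, $\alpha(L_k)$ is a finite-dimensional and hence closed subspace of $H$, and therefore $x\in\overline{\alpha(L_k)}=\alpha(L_k)$. Picking $y\in L_k\subseteq\tilde{L}$ with $\alpha(y)=x$ finishes the reduction, so ${\rm Ker}(\beta\colon\tilde{H}\to\tilde{M})={\rm Im}(\alpha\colon\tilde{L}\to\tilde{H})$ and the induced complex of finitely generated $\mathbb{C}[z]$-modules is exact.
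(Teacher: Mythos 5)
Your argument is correct and follows the same strategy as the paper: reduce to a homogeneous kernel element $x\in H_k$, use that $\alpha$ respects the gradings to get $\alpha(P_{L_k}y^{(j)}) = P_{H_k}\alpha(y^{(j)}) \to x$, and then invoke finite dimensionality of the homogeneous components to conclude $\alpha(L_k)$ is closed. The only cosmetic difference is that the paper appeals to $\dim H_k<\infty$ (so $\alpha(L_k)\subset H_k$ is closed) while you appeal to $\dim L_k<\infty$; both are available and lead to the identical conclusion.
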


\begin{proof}
Let $x \in H_k$ be an element with $\beta x = 0$. Then there is a sequence $(x_j)_{j \geq 0}$ in $L$
such that $(\alpha x_j)_{j \geq 0}$ converges to $x$. Let $P_{H_k}$ and $P_{L_k}$ be the orthogonal
projections of $H$ onto $H_k$ and $L$ onto $L_k$. Since $\alpha$ respects the
gradings, it follows that
\[
\alpha(P_{L_k}x_j) = P_{H_k}(\alpha x_j)  \stackrel{j}{\rightarrow} x.
\]
Since $H_k$ is finite dimensional, the subspace $\alpha(L_k) \subset H_k$ is closed and hence
$x \in \alpha(L_k)$. Thus the assertion follows.
\end{proof}

Let $T \in B(H)^n$ be a commuting tuple on a Hilbert space $H$ and let $(H^i,T^i)_{i \geq 0}$ be a finite
or infinite family of Hilbert spaces $H^i$ and commuting tuples $T^i \in B(H^i)^n$ such that there is a complex
\[
\ldots \stackrel{\Pi^2}{\longrightarrow} H^1 \stackrel{\Pi^1}{\longrightarrow} H^0 \stackrel{\Pi^0}{\longrightarrow} H \rightarrow 0
\]
of bounded linear operators $\Pi^i$ that intertwine the tuples  $T^i \in B(H^i)^n$ and $T \in B(H)^n$ componentwise.
Following Douglas and Misra \cite{DM} and Arveson \cite{A} we call 
$(H^{\bullet},\Pi^{\bullet},T^{\bullet})$ a \textit{weak resolution} of $T \in B(H)^n$ if
\[
 \overline{\Pi^0 H^0} = H \; \mbox{ and } \; \overline{\Pi^i H^i} = {\rm Ker}\, \Pi^{i-1} \quad (i \geq 1)
\]
and if $\Pi^0 : H^0 \rightarrow H$ and $\Pi^i : \, H^i \rightarrow H^{i-1}$ $(i \geq 1)$ induce unitary operators
\[
\Pi^0 : \; W_{T^0}(H^0) \rightarrow W_T(H) \; \mbox{ and } \; 
\Pi^i: \; W_{T^i}(H^i) \rightarrow W_{T^{i-1}}({\rm Ker}\, \Pi^{i-1}) \quad (i \geq 1).
\]
We call two weak resolutions $(H^{\bullet},\Pi^{\bullet},T^{\bullet})$ and 
$(K^{\bullet},\Gamma^{\bullet},S^{\bullet})$ \textit{isomorphic}
if there are unitary operators $U^i: \ H^i \rightarrow K^i$ that intertwine the tuples $T^i \in B(H^i)^n$,
$S^i \in B(K^i)^n$ componentwise and satisfy the relations $\Pi^0 = \Gamma^0 U^0$,
$U^{i-1} \Pi^i = \Gamma^i U^i$ $(i \geq 1)$.\\

Since $W_{M_z}(H^2_n(\mathcal D^i)) = \mathcal D^i$, the family $(H^2_n(\mathcal D^i),M_z^{\mathcal D^i})$ constructed above 
for the finitely generated $\gamma$-graded commuting row contraction $T \in B(H)^n$ defines a weak resolution
\[
\ldots \stackrel{\Pi^2}{\longrightarrow} H^2_n(\mathcal D^1) \stackrel{\Pi^1}{\longrightarrow} H^2_n(\mathcal D^0)
\stackrel{\Pi^0}{\longrightarrow} H \rightarrow 0
\]
of $T$. By a weak resolution of $T$ of the form $(H^2_n(\mathcal D^{\bullet}),\Pi^{\bullet})$ we shall always mean 
a weak resolution $(H^2_n(\mathcal D^{\bullet}),\Pi^{\bullet},M_z^{\mathcal D^{\bullet}})$.

\begin{theorem}\label{resolutions}
Let $T \in B(H)^n$ be a finitely generated $\gamma$-graded commuting row contraction. Then $T$ admits a finite weak
resolution of the form
\[
0 \rightarrow H^2_n(\mathcal D^n) \stackrel{\Pi^n}{\longrightarrow} \ldots \stackrel{\Pi^2}{\longrightarrow} 
H^2_n(\mathcal D^1) \stackrel{\Pi^1}{\longrightarrow} H^2_n(\mathcal D^0)
\stackrel{\Pi^0}{\longrightarrow} H \rightarrow 0.
\]
Any two weak resolutions of the form $(H^2_n(\mathcal D^{\bullet}),\Pi^{\bullet})$ and 
$(H^2_n(\mathcal E^{\bullet}),\Gamma^{\bullet})$ are isomorphic.
\end{theorem}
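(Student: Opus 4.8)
The plan is to reduce Theorem~\ref{resolutions} to the Hilbert Syzygy Theorem via the algebraic comparison between weak resolutions and graded free resolutions established in the preceding discussion. First I would recall that, by the construction preceding the theorem, the family $(H^2_n(\mathcal D^i),M_z^{\mathcal D^i})$ gives a weak resolution of $T$ whose induced complex $\ldots \to \tilde H^1 \to \tilde H^0 \to \tilde H \to 0$ is, by the Lemma immediately above, an exact complex of finitely generated $\mathbb C[z]$-modules, and is moreover a complex of \emph{minimal} graded $\mathbb C[z]$-module homomorphisms (the minimality inclusion ${\rm Ker}(\Pi^i)\subset \sum_j z_j \tilde H^i$ was verified in the text using Corollary~\ref{orthogonal}). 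Hence it is a minimal graded free resolution of $\tilde H$ over $\mathbb C[z]=\mathbb C[z_1,\ldots,z_n]$. By the Hilbert Syzygy Theorem $\tilde H$ has a graded free resolution of length at most $n$; since any minimal graded free resolution of a finitely generated graded module is unique up to graded isomorphism and is a direct summand of every graded free resolution, its length is also at most $n$. Concretely, this forces the $n$-th syzygy module ${\rm Ker}(\Pi^{n-1}:\tilde H^{n-1}\to\tilde H^{n-2})$ (equivalently $\tilde M^{n-1}$, with $M^{n-1}$ the $M_z$-invariant subspace obtained at stage $n-1$) to be a finitely generated free graded $\mathbb C[z]$-module, so $\tilde M^{n-1}=\mathbb C[z]\otimes \mathcal D^n$ for a finite-dimensional graded space $\mathcal D^n$, and $W_{M_z}({\rm Ker}\,\Pi^{n-1})=\mathcal D^n$ while $\Pi^n:H^2_n(\mathcal D^n)\to {\rm Ker}\,\Pi^{n-1}$ is the unitary identification furnished by Theorem~5.2 of \cite{BEKS}. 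Thus ${\rm Ker}\,\Pi^{n-1}=H^2_n(\mathcal D^n)$ already, i.e. $\Pi^{n-1}$ is injective and the resolution terminates at stage $n$, giving the asserted finite weak resolution $0\to H^2_n(\mathcal D^n)\to\cdots\to H^2_n(\mathcal D^0)\to H\to 0$.

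For the uniqueness statement I would argue that any weak resolution $(H^2_n(\mathcal D^{\bullet}),\Pi^{\bullet})$ of $T$ induces, again by the Lemma above applied at each stage, a graded free resolution of $\tilde H$ by the modules $\tilde H^i=\mathbb C[z]\otimes\mathcal D^i$; and the defining unitarity conditions $\Pi^0:W_{M_z}(H^0)\to W_T(H)$ and $\Pi^i:W_{M_z}(H^i)\to W_{M_z}({\rm Ker}\,\Pi^{i-1})$ say precisely that each induced map restricts to an isomorphism onto the wandering subspace of the kernel of the previous map, which by Corollary~\ref{orthogonal}(a) is the minimality condition ${\rm Ker}(\Pi^i:\tilde H^i\to\tilde H^{i-1})\subset\sum_j z_j\tilde H^i$. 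Hence every such weak resolution induces a \emph{minimal} graded free resolution of $\tilde H$. By the uniqueness of minimal graded free resolutions there are graded $\mathbb C[z]$-module isomorphisms $u^i:\tilde H^i\to\tilde H'^i$ (for a second such resolution $(H^2_n(\mathcal E^{\bullet}),\Gamma^{\bullet})$) commuting with the boundary maps and with $\Pi^0=\Gamma^0 u^0$. Restricting $u^i$ to the degree-$k$ components $\mathbb H_0\otimes\mathcal D^i$ etc.\ in lowest relevant degrees — more systematically, using that $\mathcal D^i\cong \tilde H^i/\sum_j z_j\tilde H^i$ as graded vector spaces — one gets graded vector-space isomorphisms $\mathcal D^i\to\mathcal E^i$ which extend to unitaries after composing with the polar part; these induce unitary $M_z$-intertwiners $U^i:H^2_n(\mathcal D^i)\to H^2_n(\mathcal E^i)$, and one checks $\Pi^0=\Gamma^0U^0$, $U^{i-1}\Pi^i=\Gamma^iU^i$ by continuity from the corresponding algebraic identities on the dense subspaces $\tilde H^i$.

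The main obstacle, and the point needing the most care, is passing from the \emph{algebraic} isomorphism of minimal free resolutions to a \emph{unitary} isomorphism of weak resolutions: a graded $\mathbb C[z]$-module isomorphism $\tilde H^i\to\tilde H'^i$ need not be isometric on the generating spaces $\mathcal D^i$. The resolution is that at the level of generators the only data are the graded dimensions $\dim\mathcal D^i_k$, which by Lemma~\ref{betti}(b) equal the Betti numbers of $\tilde H$ and hence agree for the two resolutions; so one may \emph{choose} the identification $\mathcal D^i\to\mathcal E^i$ to be unitary degree by degree, and then the intertwining relations are forced only up to the lower-order correction terms coming from $\sum_j z_j\tilde H^i$, which must be absorbed by an inductive modification of the $U^i$ (going up in homological degree) exactly as in Arveson's argument in \cite{A}. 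I would carry this out by induction on $i$: having fixed unitary $U^{i-1}$ compatible with the previous stage, use that $\Gamma^i$ is a minimal cover of ${\rm Ker}\,\Gamma^{i-1}=U^{i-1}({\rm Ker}\,\Pi^{i-1})$ to lift the unitary $\mathcal D^i\to\mathcal E^i$ to a unitary $U^i$ with $U^{i-1}\Pi^i=\Gamma^iU^i$, using the unitarity of $\Pi^i,\Gamma^i$ on the wandering subspaces to control norms. Finiteness of the induction is guaranteed by the length bound from the first part.
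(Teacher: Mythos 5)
Your argument is correct, and for the existence half it coincides with the paper's: both pass to the induced minimal graded free resolution $(\mathbb C[z]\otimes\mathcal D^{\bullet},\Pi^{\bullet})$ of $\tilde H$, invoke Hilbert's Syzygy Theorem together with the uniqueness of minimal graded free resolutions to conclude $\mathcal D^i=0$ for $i\geq n+1$, and hence that the weak resolution terminates at stage $n$. For the uniqueness half you take a detour that the paper does not need. The paper's argument is purely operator-theoretic: since $\Pi^0|\mathcal D^0$ and $\Gamma^0|\mathcal E^0$ are unitaries onto $W_T(H)$, one sets $U^0=(\Gamma^0|\mathcal E^0)^{-1}(\Pi^0|\mathcal D^0)$, checks $\Pi^0=\Gamma^0(1\otimes U^0)$ on $\mathbb C[z]\otimes\mathcal D^0$ via the intertwining relations and extends by density of the polynomials, then iterates (noting that $1\otimes U^{i-1}$ carries ${\rm Ker}\,\Pi^{i-1}$ unitarily onto ${\rm Ker}\,\Gamma^{i-1}$ and hence wandering subspace onto wandering subspace). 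In particular no gradedness of the $\Gamma^i$, no Betti numbers, and no algebraic uniqueness of minimal resolutions enter. Your middle paragraph — producing graded module isomorphisms $u^i$ and taking their ``polar parts'' — would not by itself yield the intertwining relations $U^{i-1}\Pi^i=\Gamma^iU^i$, as you yourself observe; the inductive construction in your final paragraph, which defines $U^i$ directly from the unitarity of $\Pi^i$ and $\Gamma^i$ on wandering subspaces, is exactly the paper's proof and makes the preceding algebraic comparison superfluous for this half of the theorem. (Two small imprecisions: ${\rm Ker}\,\Pi^{n-1}$ is the closure of the range of the injective map $\Pi^n$, not literally $H^2_n(\mathcal D^n)$; and it is $\Pi^n$, not $\Pi^{n-1}$, whose injectivity terminates the resolution.)
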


\begin{proof} 
To prove the existence part, we show that the weak resolution
$(H^2_n(\mathcal D^{\bullet}),\Pi^{\bullet})$ of $T$ constructed above is finite. Let 
$(\mathbb C[z] \otimes \mathcal D^{\bullet},\Pi^{\bullet})$ be the induced minimal graded free resolution of
$\tilde{H}$. By Hilbert's Syzygy Theorem (Theorem VII.43 in \cite{ZS}) the space
${\rm Ker} \, \Pi^{n-1}$ is a finitely generated free $\mathbb C[z]$-module. By the uniqueness of minimal free graded
resolutions the resolution $(\mathbb C[z] \otimes \mathcal D^{\bullet},\Pi^{\bullet})$ is isomorphic to the
finite resolution
\[
0 \rightarrow {\rm Ker} \, \Pi^{n-1} \rightarrow  \mathbb C[z] \otimes \mathcal D^{n-1} \stackrel{\Pi^{n-1}}{\longrightarrow} \ldots 
\stackrel{\Pi^2}{\longrightarrow} \mathbb C[z] \otimes \mathcal D^1 \stackrel{\Pi^1}{\longrightarrow} \mathbb C[z] \otimes \mathcal D^0
\stackrel{\Pi^0}{\longrightarrow} \tilde{H} \rightarrow 0.
\]
Hence $\mathcal D^i = 0$ for $i \geq n+1$.

Let $(H^2_n(\mathcal D^{\bullet}),\Pi^{\bullet})$ and $(H^2_n(\mathcal E^{\bullet}),\Gamma^{\bullet})$ be weak resolutions
of $T$. Then $\Pi^0$ and $\Gamma^0$ induce unitary operators $\Pi^0: \mathcal D^0 = W_{M_z}(H^2_n(\mathcal D^0)) \rightarrow W_T(H)$
and $\Gamma^0: \mathcal E^0 = W_{M_z}(H^2_n(\mathcal E^0)) \rightarrow W_T(H)$. Hence there is a unitary operator
$U^0: \mathcal D^0 \rightarrow \mathcal E^0$ such that $\Pi^0 = \Gamma^0 U^0$. Since the polynomials are dense in $H^2_n$,
it follows that $\Pi^0$ acts as the composition
\[
H^2_n(\mathcal D^0) \stackrel{1 \otimes U^0}{\longrightarrow} H^2_n(\mathcal E^0) \stackrel{\Gamma^0}{\longrightarrow} H.
\]
Similarly, one obtains a unitary operator $U^1: \mathcal D^1 \rightarrow \mathcal E^1$ such that 
$\Gamma^1 (1_{H^2_n} \otimes U^1) = (1_{H^2_n} \otimes U^0) \Pi^1$. Inductively one obtains unitary
operators $1_{H^2_n} \otimes U^i: H^2_n(\mathcal D^i) \rightarrow H^2_n(\mathcal E^i)$ that establish an isomorphism
between $(H^2_n(\mathcal D^{\bullet}),\Pi^{\bullet})$ and $(H^2_n(\mathcal E^{\bullet}),\Gamma^{\bullet})$.
\qedhere
\end{proof}

Let us fix a weak resolution $(H^2_n(\mathcal D^{\bullet}),\Pi^{\bullet})$ of $T$ and the induced minimal graded
free resolution $(\mathbb C[z] \otimes \mathcal D^{\bullet},\Pi^{\bullet})$ of $\tilde{H}$.
Since the operators $\Pi^i: H^2_n(\mathcal D^i) \rightarrow H^2_n(\mathcal D^{i-1})$
intertwine the multiplication tuples $M_z$ on $H^2_n(\mathcal D^{i})$ and $H^2_n(\mathcal D^{i-1})$, there
are operator-valued analytic functions $\varphi^i: \mathbb B \rightarrow B(\mathcal D^i,\mathcal D^{i-1})$ such that
$\Pi^i$ acts as the multiplication operator $\Pi^i f = \varphi^i f$ (\cite{B}) for $i \geq 1$. An elementary argument
using the finite dimensionality of the spaces $\mathcal D^i$ and the inclusions $\varphi^i (\mathbb C[z] \otimes \mathcal D^i)
\subset \mathbb C[z] \otimes \mathcal D^{i-1}$ shows that the functions $\varphi^i$ are in fact operator-valued polynomials. For
convenience, let us set $\varphi^0 = 0$.

For a module $M$ and a commuting tuple $a = (a_1, \ldots, a_n)$ of module homomorphisms $a_i: M \rightarrow M$,
we denote by
\[
K_{\bullet}(a,M): 0 \rightarrow \Lambda^n M \stackrel{\delta_n^a}{\longrightarrow} \Lambda^{n-1} M
\stackrel{\delta_{n-1}^a}{\longrightarrow} \ldots \stackrel{\delta_1^a}{\longrightarrow} \Lambda^0 M \rightarrow 0
\]
the Koszul complex of $a$ and by $H_p(a,M) =  {\rm Ker} \, \delta_p^a/{\rm Im} \, \delta_{p+1}^a$ 
its homology groups (cf. Chapter 2 in \cite{EP}). 

Let $K = (K_{p,q},\partial',\partial'')$ be the double complex with 
$K_{p,q} = K_p(z - \lambda,\mathbb C[z] \otimes \mathcal D^q)$, $q$-th column $(K_{\bullet,q},\partial'_{\bullet})$ 
equal to $(-1)^q$ times the Koszul complex $K_{\bullet}(z - \lambda,\mathbb C[z] \otimes \mathcal D^q)$ 
and $p$-th row $(K_{p,\bullet},\partial''_{\bullet})$ given by the
$\binom{n}{p}$-fold direct sum of the complex $(\mathbb C[z] \otimes \mathcal D^{\bullet},\varphi^{\bullet})$.
Since the canonically augmented complexes
\[
(K_{\bullet,q},\partial'_{\bullet}) \rightarrow \mathcal D^q \rightarrow 0 \; \mbox{ and } \;
(K_{p,\bullet},\partial''_{\bullet}) \rightarrow \Lambda^p\tilde{H} \rightarrow 0
\]
are exact and $K$ has bounded diagonals, standard double complex arguments (Lemma A2.6 in \cite{EP}) 
show that there are induced vector space isomorphisms
\[
H_p(z -\lambda,\tilde{H}) \cong H'_p H''_0(K) \cong H''_p H'_0(K)
\cong H_p(\mathcal D^{\bullet},\varphi^{\bullet}(\lambda)) \quad (p \geq 0).
\]
By Theorem 2.3 from \cite{Comp} there are
vector space isomorphisms $H_p(z,\tilde{H}) \cong \overline{H}_p(T,H)$ where by definition
\[
\overline{H}_p(T,H) = {\rm Ker} \, \delta_p^{T}/\overline{{\rm Im} \, \delta_{p+1}^{T}}.
\]

Recall that a commuting tuple $S = (S_1, \ldots, S_n) \in B(H)^n$  is
called \textit{Fredholm} if the homology spaces $H_p(S,H)$ of its Koszul complex are all finite dimensional. In
this case $\overline{H}_p(S,H) = H_p(S,H)$ for all $p$ and
the \textit{Fredholm index} of $S$ is defined as ${\rm ind}(S) = \sum_{0 \leq p \leq n} (-1)^p \dim H_p(S,H).$

\begin{corollary} \label{index}
Let $T \in B(H)^n$ be a finitely generated $\gamma$-graded commuting row contraction with a weak resolution
$(H^2_n(\mathcal D^{\bullet}),\varphi^{\bullet})$. Then there are vector space isomorphisms
\[
\overline{H}_p(T,H) \cong H_p(\mathcal D^{\bullet},\varphi^{\bullet}(0)) \cong \mathcal D^p \quad (p \geq 0).
\]
If $T$ is Fredholm, its index is given by ${\rm ind}(T) = \sum_{0 \leq p \leq n} (-1)^p \dim \mathcal D^p$.
\end{corollary}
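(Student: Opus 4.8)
The plan is to read off the statement from the chain of isomorphisms assembled just before the corollary, together with one structural observation about minimal resolutions; no new machinery is needed.

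First I would specialize the displayed chain
\[
H_p(z-\lambda,\tilde{H}) \cong H_p(\mathcal D^{\bullet},\varphi^{\bullet}(\lambda)) \qquad (p \geq 0)
\]
to the point $\lambda = 0$ and combine it with the isomorphism $H_p(z,\tilde{H}) \cong \overline{H}_p(T,H)$ coming from Theorem 2.3 of \cite{Comp}. This already yields the first asserted isomorphism $\overline{H}_p(T,H) \cong H_p(\mathcal D^{\bullet},\varphi^{\bullet}(0))$ for every $p \geq 0$.

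Next I would show that all the differentials of the complex $(\mathcal D^{\bullet},\varphi^{\bullet}(0))$ vanish. By construction $\varphi^0 = 0$, and for $i \geq 1$ the $\mathbb C[z]$-module homomorphism $\Pi^i = \varphi^i \colon \mathbb C[z] \otimes \mathcal D^i \to \mathbb C[z] \otimes \mathcal D^{i-1}$ is part of the minimal graded free resolution $(\mathbb C[z] \otimes \mathcal D^{\bullet},\varphi^{\bullet})$ of $\tilde{H}$; hence $\operatorname{Im} \varphi^i = \operatorname{Ker}(\Pi^{i-1}) \subset \sum_{j=1}^n z_j (\mathbb C[z] \otimes \mathcal D^{i-1})$. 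Restricting to the generating subspace $\mathcal D^i = 1 \otimes \mathcal D^i$ shows that every matrix entry of the operator-valued polynomial $\varphi^i$ lies in the maximal ideal $(z_1,\ldots,z_n)$, so that $\varphi^i(0) = 0$. Consequently $(\mathcal D^{\bullet},\varphi^{\bullet}(0))$ has all differentials zero, whence $H_p(\mathcal D^{\bullet},\varphi^{\bullet}(0)) = \mathcal D^p$ for every $p \geq 0$; chaining this with the previous step gives $\overline{H}_p(T,H) \cong \mathcal D^p$.

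Finally, for the index statement, suppose that $T$ is Fredholm. Then $\overline{H}_p(T,H) = H_p(T,H)$ for all $p$ and these spaces are finite dimensional, so $\dim H_p(T,H) = \dim \mathcal D^p < \infty$; moreover $H_p(T,H) = 0$ for $p > n$ since the Koszul complex has length $n$ (in accordance with $\mathcal D^p = 0$ for $p \geq n+1$, which is the finiteness of the weak resolution from Theorem \ref{resolutions}). Substituting into ${\rm ind}(T) = \sum_{0 \leq p \leq n} (-1)^p \dim H_p(T,H)$ gives ${\rm ind}(T) = \sum_{0 \leq p \leq n} (-1)^p \dim \mathcal D^p$. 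The only point that requires a little care is the passage from the abstract minimality of the resolution to the concrete equality $\varphi^i(0) = 0$; everything else is bookkeeping with isomorphisms already available in the excerpt.
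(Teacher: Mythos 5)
Your proposal is correct and follows essentially the same route as the paper: the first isomorphism is exactly the specialization at $\lambda=0$ of the double-complex isomorphisms combined with Theorem 2.3 of \cite{Comp}, and the identification $H_p(\mathcal D^{\bullet},\varphi^{\bullet}(0))\cong\mathcal D^p$ is obtained, just as in the paper, from the minimality condition $\varphi^p(\mathbb C[z]\otimes\mathcal D^p)\subset\sum_{i=1}^n z_i(\mathbb C[z]\otimes\mathcal D^{p-1})$, which forces $\varphi^p(0)=0$. Your extra remarks on deducing this from $\operatorname{Im}\varphi^i=\operatorname{Ker}\Pi^{i-1}$ and on the Fredholm index bookkeeping only make explicit what the paper leaves implicit.
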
 

\begin{proof}
It remains to show that $H_p(\mathcal D^{\bullet},\varphi^{\bullet}(0)) \cong \mathcal D^p$ for $p \geq 0$.
By construction the complex $(\mathbb C[z] \otimes \mathcal D^{\bullet},\varphi^{\bullet})$ satisfies the
minimality condition
\[
\varphi^p (\mathbb C[z] \otimes \mathcal D^p) \subset \sum^n_{i=1} z_i (\mathbb C[z] \otimes \mathcal D^{p-1}) \quad (p \geq 1).
\]
Hence $\varphi^p(0) = 0$ for $p \geq 0$ and the assertion follows.
\qedhere
\end{proof}

\begin{remark}
{\rm (a) The above double complex methods show in particular that
\[
\mathcal D^p \cong H_p(\mathcal D^{\bullet},\varphi^{\bullet}(0)) \cong H_p(z,\tilde{H}) = 0 \quad \mbox{for } p > n.
\]
This argument could be used to replace the reference to Hilbert's Syzygy Theorem in the proof of Theorem \ref{resolutions}.\\

(b) Using dilation results due to Arveson \cite{AIII} or M\"uller and Vasilescu \cite{MV} one can construct, for each pure commuting row contraction 
$T \in B(H)^n$, an infinite resolution
\[
\ldots \stackrel{\varphi^2}{\longrightarrow} H^2_n(\mathcal E^1) \stackrel{\varphi^1}{\longrightarrow} H^2_n(\mathcal E^0)
\longrightarrow H \rightarrow 0
\]
intertwining the tuples $M_z \in B(H^2_n(\mathcal E^i))^n$ and $T \in B(H)^n$, where the spaces $\mathcal E^i$
are typically infinite dimensional and the maps $\varphi^i: \mathbb B \rightarrow B(\mathcal E^i,\mathcal E^{i-1})$
are partially isometric multipliers. As in the proof of Corollary \ref{index} one can use the double complex 
$K = (K_{\bullet}(z-\lambda,H^2_n(\mathcal E^{\bullet})))$ to show that there are vector space isomorphisms
\[
H_p(T-\lambda,H) \cong H_p(\mathcal E^{\bullet},\varphi^{\bullet}(\lambda))
\]
for $\lambda \in \mathbb B$ and $p \geq 0$. This method was used by Greene in \cite{G} to calculate  
the homology groups of the Koszul complex of $T-\lambda$.	
}
\end{remark}

Standard results from the dilation theory for row contractions \cite{AIII,MV} can be used to give a more explicit definition of the maps $\Pi^i:H^2_n(\mathcal D^i)\rightarrow H^2_n(\mathcal D^{i-1})$
 in the setting of Theorem \ref{resolutions}. Let $S \in B(K)^n$ be a pure commuting row contraction on a Hilbert space $K$. The defect operators of $S$ are defined as
\[
D_S = (1_{K^n}-S^\ast S)^{1/2}\in B(K^n),\ D_{S^\ast}=(1_K-SS^\ast)^{1/2}\in B(K),
\]
where $S:K^n\rightarrow K,(x_i)\mapsto \sum_{1\leq i\leq n}S_ix_i$, is the row operator induced by $S$
and $S^\ast:K\rightarrow K^n,x\mapsto (S^\ast_ix)_{1\leq i\leq n}$, denotes its adjoint. Define
$\mathcal D_S  =\overline{{\rm Im}(D_S)} \subset K^n$ and $\mathcal D_{S^\ast}=\overline{{\rm Im}(D_{S^\ast})}\subset K$. Then
\[
j_S: K\rightarrow H^2_n(\mathcal D_{S^\ast}), \; 
j_S(x)(z)=\sum_{\alpha\in \mathbb N^n}\frac{|\alpha|!}{\alpha!}(D_{S^\ast}S^{\ast\alpha}x)z^\alpha
\]
is an isometry that intertwines $S^\ast \in B(K)^n$ and $M^\ast_z \in B(H^2_n(\mathcal D_{S^\ast}))^n$. 
For $z \in \mathbb B$, define $Z:K^n \rightarrow K, (x_i)\mapsto \sum_{1\leq i\leq n}z_ix_i$. The \textit{characteristic function} of $S$
\[
\theta_S:\mathbb B\rightarrow B(\mathcal D_S,\mathcal D_{S^\ast}),\theta_S(z) = -S + D_{S^\ast}(1_K-ZS^\ast)^{-1}ZD_S
\]
induces a partially isometric multiplication operator
\[
M_{\theta_S}:H^2_n(\mathcal D_S)\rightarrow H^2_n(\mathcal D_{S^\ast}),f \mapsto \theta_Sf
\]
such that $j_S j_S^* + M_{\theta_S}M^*_{\theta_S} = 1_{H^2_n(\mathcal D_{S^*})}$.

Note that ${\rm Im}\ M_{\theta_S} \subset \{f \in H^2_n(\mathcal D_{S^*}); f(0) = 0 \}$ if and only if
$S|\mathcal D_S = 0$. This is easily seen to happen if and only if $S$ is a partial isometry. In this case,
\[
1 - S^*S = P_{{\rm Ker} \ S} = D_S \quad \mbox{and} \quad \mathcal D_S = {\rm Ker} \ S.
\]

Let $\mathcal D$ be a Hilbert space and $M\subset H^2_n(\mathcal D)$ a closed $M_z$-invariant subspace.
The compression $S = P_KM_z|K \in B(K)^n$ of $M_z$ to the co-invariant subspace $K = H^2_n(\mathcal D) \ominus M$ is
a pure commuting row contraction. Suppose in addition that $M \subset \{f \in H^2_n(\mathcal D); f(0) = 0 \}$. Then for
$x \in \mathcal D$ and $f \in M$, we obtain
\[
\langle f,x \rangle_{H^2_n(\mathcal D)} = \langle f(0),x \rangle_{\mathcal D} = 0.
\]
Hence
\[
D_{S^*} = (P_K (1_{H^2_n(\mathcal D)} - \sum^n_{i=1} M_{z_i} M^*_{z_i})|K)^{1/2} = (P_K P_{\mathcal D}|K)^{1/2} 
= P^K_{\mathcal D} \in B(K)
\]
is the orthogonal projection of $K$ onto $\mathcal D \subset M^{\bot} = K$ and $\mathcal D_{S^*} = \mathcal D$.
An elementary calculation shows that $j_S$ acts as the inclusion map
$j_S: K \rightarrow H^2_n(\mathcal D), f \mapsto f$. Hence ${\rm Im} \ M_{\theta_S} = M \subset 
\{f \in H^2_n(\mathcal D); f(0) = 0 \}$. Consequently $D_S = P_{{\rm Ker}\ S}$ and $\mathcal D_S = {\rm Ker}\ S$. 
By Lemma 9 and Lemma 11 in \cite{E} the restriction of $M_{\theta_S}$ to the closed subspace
\[
\tilde{\mathcal D} = \{ x \in \mathcal D_S; \; D_S x \in {\rm Ker}(\delta^{S^*}_{n-1}) \} = 
{\rm Ker}\ S \cap {\rm Ker}(\delta^{S^*}_{n-1})
\]
yields a unitary operator
\[
\tilde{\mathcal D} \rightarrow W_{M_z}(M), \; x \mapsto \theta_S x.
\]
Since the complex $K_{\bullet}(M^*_z,H^2_n(\mathcal D))$ is exact in every degree $p < n$ and is dual to the
cochain Koszul complex $K^{\bullet}(M_z,H^2_n(\mathcal D))$ (Section 2.6 in \cite{EP}), we find that
\[
{\rm Ker}\ S \cap {\rm Ker}(\delta^{S^*}_{n-1}) = {\rm Ker}\ S \cap {\rm Ker}(\delta^{M_z^*}_{n-1}) = 
{\rm Ker}\ S \cap ({\rm Ker}\ M_z)^{\bot}.
\]
By the proof of Theorem 8 in \cite{E} (with $m = 1$), for $x \in \mathcal D_S$, the function $\theta_S x \in H^2_n(\mathcal D)$
takes the form
\[
\theta_S x = M_z (\oplus j) D_S x = M_z x.
\]
Thus we have shown that the row operator $M_z$ induces a unitary operator
\[
{\rm Ker}\ S \cap ({\rm Ker}\ M_z)^{\bot} \rightarrow W_{M_z}(M), x \mapsto M_z x.
\]

Let $T \in B(H)^n$ be as in Theorem \ref{resolutions} a finitely generated $\gamma$-graded commuting row contraction. If in
the construction of a weak resolution for $T$ mappings
\[
H^2_n(\mathcal D^{i-1}) \stackrel{\Pi^{i-1}}{\longrightarrow} H^2_n(\mathcal D^{i-2}) \stackrel{\Pi^{i-2}}{\longrightarrow}
\ldots \stackrel{\Pi^1}{\longrightarrow} H^2_n(\mathcal D^0) \stackrel{\Pi^0}{\longrightarrow} H \rightarrow 0  
\]
have been defined, as in the section leading to Theorem \ref{resolutions}, then by applying the preceding 
observations to the space $M^{i-1} = {\rm Ker}(\Pi^{i-1}) \subset H^2_n(\mathcal D^{i-1})$ and to the compression
\[
S^i = P_{K^{i-1}} M_z|K^{i-1} \in B(K^{i-1})^n
\]
of $M_z$ onto $K^{i-1} = H^2_n(\mathcal D^{i-1}) \ominus M^{i-1}$ one obtains a finite-dimensional subspace
\[
\mathcal D^i = \tilde{\mathcal D} = {\rm Ker}\ S^i \cap ({\rm Ker}\ M_z^{\mathcal D^{i-1}})^{\bot} \subset {\rm Ker}\ S^i = \mathcal D_{S^i} 
\]
such that the characteristic function $\theta_{S^i}$ of $S^i$ induces a unitary operator
\[
\mathcal D^i \rightarrow W_{M_z}(M^{i-1}), \; x \mapsto \theta_{S^i} x = M_z^{\mathcal D^{i-1}} x.
\]
An inductive application of this construction gives rise to a weak resolution 
\[
0 \rightarrow H^2_n(\mathcal D^n) \stackrel{\theta_{S^n}}{\longrightarrow} \ldots \stackrel{\theta_{S^2}}{\longrightarrow} 
H^2_n(\mathcal D^1) \stackrel{\theta_{S^1}}{\longrightarrow} H^2_n(\mathcal D^0)
\stackrel{\Pi^0}{\longrightarrow} H \rightarrow 0
\]
of $T$. Restriction to the polynomials yields the minimal graded free resolution of $\tilde{H}$
\[
0 \rightarrow \mathbb C[z] \otimes \mathcal D^n \stackrel{\varphi^n}{\longrightarrow} \ldots \stackrel{\varphi^2}{\longrightarrow} 
\mathbb C[z] \otimes \mathcal D^1 \stackrel{\varphi^1}{\longrightarrow} \mathbb C[z] \otimes\mathcal D^0
\stackrel{\Pi^0}{\longrightarrow} \tilde{H} \rightarrow 0,
\]
where the maps $\varphi^i$ are the $\mathbb C[z]$-module homomorphisms induced by multiplication with the operator-valued
polynomials
\[
\varphi^i: \mathbb B \rightarrow B(\mathcal D^i, \mathcal D^{i-1}), \varphi^i(z)x = \theta_{S^i}(z)(x) = (M_z^{\mathcal D^{i-1}}x)(z).
\]

\vspace{1.5cm}

J. Eschmeier \\
Fachrichtung Mathematik\\
Universit\"at des Saarlandes\\
Postfach 151150\\
D-66041 Saarbr\"ucken, Germany\\
email: eschmei@math.uni-sb.de

\end{document}